\newif\ifAndo
\newtheorem{example}[theorem]{Example}
\newtheorem{ass}{Assumption}[section]  
\newtheorem{remark}[theorem]{Remark}
\newcommand  \esssup {\mathop{\text{ess} \sup} }
\newcommand \eps   {\varepsilon}
\newcommand \N   {\mathbb{N}}
\newcommand \R   {\mathbb{R}}
\newcommand \K   {\mathcal{K}}
\newcommand \Kinf{\mathcal{K_\infty}}
\newcommand \KL  {\mathcal{KL}}
\newcommand \LL  {\mathcal{L}}
\newcommand{\Uc}{\ensuremath{\mathcal{U}}}
\newcommand \qrq   {\quad\Rightarrow\quad}
\newcommand \srs   {\ \ \Rightarrow\ \ }
\newcommand \Iff   {\Leftrightarrow}
\newcommand \id  {\operatorname{id}}
\newcounter{syscounter}
\newenvironment{sysnum}{\begin{list}{($\Sigma{\arabic{syscounter}}$)}%
{\settowidth{\labelwidth}{($\Sigma4$)}
\settowidth{\leftmargin}{($\Sigma4$)~}%
\usecounter{syscounter}}}
{\end{list}}
\newcommand{\ccat}[3]{{#1\, \underset{#3}{\lozenge}\,{#2}}}
\newcommand{\tm}{\times}
\newcommand{\midset}{\;:\;}
\newcommand\q{\enquote}
\newcommand{\einsnorm}[2]{\ensuremath{
    \!\!\;\!\!\!\;
    \left\bracevert\!\!\!\!\!\left\bracevert
    \!
		\ifthenelse{\isempty{#2}}{#1}{#1(#2)}
        %#1(#2)
    \!
      \right\bracevert\!\!\!\!\!\right\bracevert
    \!\!\;\!\!\!\;
  }}
\newcommand{\supnorm}[2]{\ensuremath{
    \!\!\;\!\!\!\;
    \left\bracevert\!\!\!\!\left\bracevert
        {#1}_{#2}
      \right\bracevert\!\!\!\!\right\bracevert
    \!\!\;\!\!\!\;
  }}
\title{
Small gain theorems for general networks of heterogeneous infinite-dimensional systems
%\thanks{This work was supported by German Research Foundation (DFG), grant: MI 1886/2-1.}
}
\author{Andrii Mironchenko \thanks{Andrii Mironchenko is with Faculty of Computer Science and Mathematics, University of
  Passau, Germany (\email{andrii.mironchenko@uni-passau.de}).\quad
	This work is supported by German Research Foundation (DFG), grant: MI 1886/2-1.
	\quad
	Shortened version of the paper was presented at NOLCOS 2019:
A. Mironchenko. Small gain theorems for networks of heterogeneous systems. Proc. of the 11th IFAC Symposium on Nonlinear Control Systems (NOLCOS 2019), pages 925-930, 2019. 
	}
}
\date{\today}
\begin{document}
\maketitle
%\slugger{mms}{xxxx}{xx}{x}{x--x}%slugger should be set to mms, siap, sicomp, sicon, sidma, sima, simax, sinum, siopt, sisc, or sirev

\begin{abstract}
We prove a small-gain theorem for interconnections of $n$ nonlinear 
heterogeneous input-to-state stable (ISS) control systems of a general nature, covering partial, delay and ordinary differential equations.
Furthermore, for the same class of control systems, we derive small-gain theorems for asymptotic gain, 
uniform global stability and weak input-to-state stability properties.
We show that our technique is applicable for different formulations of ISS property (summation, maximum, semimaximum) and discuss tightness of achieved small-gain theorems.
Finally, we introduce variations of uniform asymptotic gain and uniform limit properties, which are particularly useful for small-gain arguments and characterize ISS in terms of these notions.
\end{abstract}

\begin{keywords}
infinite-dimensional systems, input-to-state stability, interconnected systems, nonlinear systems, small-gain theorems.
\end{keywords}

\begin{AMS}
93C25, % Systems in abstract spaces
37C75, % Stability theory
93A15, %      Large scale systems
%93D30,%      Scalar and vector Lyapunov functions
93C10. %      Nonlinear systems
%    93A10,%      General systems    
%93C20, 93C25, 37C75, 93D30, 93C10, 35K58.
\end{AMS}

\pagestyle{myheadings}
\thispagestyle{plain}
\markboth{ISS Small-gain theorems for infinite-dimensional systems}{ISS Small-gain theorems for infinite-dimensional systems}

\section{Introduction}
\label{ex:Intro}

The notion of input-to-state stability (ISS), introduced in \cite{Son89} for ordinary differential equations (ODEs), has become a backbone for much of nonlinear control theory, and is currently a well-developed theory with a firm theoretical basis and such powerful tools for ISS analysis, as Lyapunov and small-gain methods. Broad applications of ISS theory include robust stabilization of nonlinear systems \cite{FrK08}, design of nonlinear observers \cite{ArK01}, analysis of large-scale networks \cite{JTP94, DRW07, DRW10}, etc.

The impact of finite-dimensional ISS theory and the need of proper tools for robust stability analysis of distributed parameter systems resulted in generalizations of ISS concepts to broad classes of infinite-dimensional systems, including partial differential equations (PDEs) with distributed and boundary controls, nonlinear evolution equations in Banach spaces with bounded and unbounded input operators, etc.
 \cite{DaM13, JNP18, JLR08, KaK16b, KaK18, KaK19, KaK19b, MaP11, Mir16, MiI15b, MiW18b, TPT18}.

Techniques developed within the infinite-dimensional ISS theory include characterizations 
of ISS and ISS-like properties in terms of weaker 
stability concepts 
\cite{Mir16, MiW18b}, \cite{JNP18, Sch19c}, constructions of ISS Lyapunov functions for PDEs with distributed and boundary controls 
\cite{DaM13, MaP11, MiI15b, PrM12, TPT18, ZhZ18}, non-coercive ISS Lyapunov functions \cite{MiW18b, JMP20}, efficient methods for study of boundary control systems 
\cite{JNP18, JSZ19, KaK16b, KaK19, ZhZ19b}, etc.
% small-gain theorems \cite{DaM13, MiI15b, KaJ07, KaJ11, BLJ18, KaK18, KaK19b} etc.
For a survey on the infinite-dimensional ISS theory we refer to \cite{MiP20}.

One of the cornerstones of the mathematical control theory is the analysis of interconnected systems.
Large-scale nonlinear systems can be very complex, so that a direct stability analysis of such systems is seldom possible. Small-gain theorems, which are one of the pillars on which the ISS theory stands, make it possible to overcome this obstacle. These theorems guarantee input-to-state stability of an interconnected system, provided all subsystems are ISS and the interconnection structure, described by gains, satisfies the small-gain condition.

\subsection{Existing ISS small-gain results}
There are two kinds of nonlinear small-gain theorems: theorems in terms of trajectories and in terms of Lyapunov functions.
In \emph{small-gain theorems in the trajectory formulation} one assumes that each subsystem is ISS both w.r.t. external inputs and internal inputs from other subsystems, and the so-called internal gains of subsystems characterizing the influence of subsystems on each other are known. The small-gain theorem states that the feedback interconnection is ISS provided the gains satisfy some kind of small-gain conditions.
First small-gain theorems of this type have been developed in \cite{JTP94} for feedback couplings of two ODE systems and in \cite{DRW07} for couplings of $n$ ODE systems.

In \emph{Lyapunov small-gain theorems} it is assumed that all subsystems are ISS w.r.t. external and internal inputs and the ISS Lyapunov functions for subsystems are given together with the corresponding Lyapunov gains. If Lyapunov gains satisfy the small-gain condition, then the whole interconnection is ISS and moreover, an ISS Lyapunov function for the overall system can be constructed.
For couplings of 2 systems such theorems have been shown in \cite{JMW96} and this result has been extended to couplings of $n$ nonlinear ODE systems in \cite{DRW10}. 
%For applications of small-gain results to nonlinear control a reader may consult \cite{LJH14}.

As was shown in \cite{DaM13}, ISS small-gain theorems in a Lyapunov formulation can be extended to interconnections of $n$ infinite-dimensional systems without radical changes in the formulation and proof technique. Generalization of integral ISS small-gain theorems is more difficult primarily due to the fact that the state spaces for subsystems should be carefully chosen (see \cite{MiI15b}), although the formulation of the small-gain theorem itself remains similar to the ODE case.
ISS small-gain theorems in terms of vector Lyapunov functions have been reported in \cite{KaJ07,KaJ11}.
% and applied to delayed chemostat model in \cite{KaJ12}.

\emph{The case of trajectory-based infinite-dimensional small-gain theorems for couplings of $n>2$ systems is significantly more complicated} since the proof of such theorems for the ODE case (see \cite{DRW07}) is based on the fundamental result that ISS of ODE systems is equivalent to uniform global stability (UGS) combined with the asymptotic gain (AG) property shown in \cite{SoW96, SoW95}. Such a characterization is not valid for infinite-dimensional systems, as argued in \cite{Mir16, MiW18b} which makes the proof of \cite{DRW07} not applicable to infinite-dimensional systems without substantial modifications.

For time-delay systems, which are a special case of infinite-dimensional systems, several small-gain results are available.
To the knowledge of the author, the first small-gain results for delay systems have been obtained in \cite{PTM06}, where small-gain theorems for couplings of $2$ \emph{weakly ISS} (i.e. possessing UGS and AG properties) time-delay systems have been derived, guaranteeing weak ISS property for the interconnection provided that the small-gain condition holds.
Small-gain theorems for \emph{weak ISS} of networks of $n$ time-delay systems have been obtained in \cite{TWJ09} and \cite{PDT13}.

The first ISS small-gain theorems, applicable for time-delay systems have been achieved in \cite{KaJ07}, where the small-gain theorems in terms of trajectories (in maximum formulation) have been shown for couplings of two control systems of a rather general nature, covering in particular time-delay systems. 
In \cite{TWJ12} ISS small-gain theorems (in maximum formulation) for couplings of $n\ge 2$ time-delay systems have been obtained by using a Razumikhin-type argument, motivated by \cite{Tee98}.

Recently in \cite{BLJ18} the small-gain theorems for couplings of $n$ input-to-output stable (IOS) evolution equations in Banach spaces have been derived. As a special case of these results, the authors obtain a small-gain theorem for networks of $n$ ISS systems in the maximum formulation. 
%However, the developments of this paper are valid only for the maximum formulation of the ISS or IOS property.
Application of small-gain theorems for stability analysis of coupled parabolic-hyperbolic PDEs has been performed in \cite{KaK18}.
Small-gain based boundary feedback design for global exponential stabilization of 1-D semilinear parabolic PDEs has been proposed in \cite{KaK19b}.

\subsection{Contribution}
\textit{Our main results are ISS small-gain theorems (in summation, semimaximum and maximum formulations) for feedback interconnections of $n$ nonlinear heterogeneous control systems whose components belong to a broad class of control systems covering PDEs, time-delay systems, ODE, etc.
Furthermore, we show small-gain theorems for AG, UGS and weak ISS properties.
}

For the description of the interconnections of control systems, we adopt an approach employed in \cite[Definition 3.3]{KaJ07}.
Furthermore, in this paper, we use a variation of a uniform asymptotic gain (UAG) property, which we call bounded input uniform asymptotic gain (bUAG) property.
Although it was used so far not so often as the standard UAG property (but see, e.g., \cite{Tee98}, \cite[Proposition 1.4.3]{Mir12}),
it is more flexible in use and in most cases as powerful as the standard UAG property. 

The ISS small-gain theorem for the summation formulation of the ISS property (Theorem~\ref{thm:ISS_SGT}) is achieved in 3 steps: 
\begin{itemize}
    \item[(i)] We derive uniform global stability of the interconnection (which is the UGS small-gain Theorem~\ref{thm:UGS_SGT}) using the methods developed in \cite{DRW07}. 
    \item[(ii)] We show that the network satisfies bUAG property (the main technical step).
    \item[(iii)] We show that UGS $\wedge$ bUAG is equivalent to ISS, which concludes the proof. Here we base ourselves on characterizations of ISS obtained in \cite{MiW18b}.
\end{itemize}
Based on this strategy, we prove in Sections~\ref{sec:Semimaximum_formulation}, \ref{sec:Maximum_formulation_new} also the ISS small-gain theorems in semimaximum (Theorem~\ref{thm:ISS_SGT_semimax_form}) and maximum (Theorem~\ref{thm:ISS_SGT_max_form}) formulations.
In Section~\ref{sec:Tightness_small-gain-theorem} we argue that the obtained small-gain theorems are tight.
In Section~\ref{sec:Discussion} we discuss in detail the generality and flexibility of the obtained results.

This paper is motivated by the ISS small-gain theorems for networks of $n\in\N$ ODE systems, reported in \cite{DRW07}, and recovers these results as a special case.
As a particular application of our general small-gain theorems, one can obtain novel small-gain results for couplings of $n$ nonlinear time-delay systems and for evolution equations in Banach spaces. In Section~\ref{sec:Discussion} we argue that even in these special cases our results are new and broadly extend the machinery, available for these system classes.

Motivated by the notion of strong ISS, introduced in \cite{MiW18b} and studied in \cite{MiW18b}, \cite{NaS18}, in \cite{Sch19c,ScZ18} the concept of \emph{weak input-to-state stability (wISS)} has been introduced and investigated, in particular, in the context of robust stabilization of port-Hamiltonian systems, see \cite{ScZ18}. 
%The system is called weak ISS, if it is uniformly globally stable and has an asymptotic gain property.
\emph{In Section~\ref{sec:wISS_SGT} we derive a small-gain result for a weak ISS property.}

Finally, we note that the \emph{characterizations of ISS of infinite-dimensional systems} in terms of weaker stability properties, derived in \cite{MiW18b}, are a key ingredient in the proof of the small-gain theorems in this paper. These characterizations have been already useful in several further contexts, for instance in the non-coercive Lyapunov function theory, see \cite{MiW18b, JMP20}  and in the characterization of the practical ISS property \cite{Mir19a}.
\emph{We hope that refined and more flexible versions of these characterizations derived in Section~\ref{sec:New_ISS_Characterizations} will find further applications within the infinite-dimensional ISS theory.}

\subsection{Notation}% We adopt a large portion of notation from \cite{DRW07}, which is the main motivation for is work.
We denote $\R_+:=[0,\infty)$ and $\R^n_+:= \{x\in\R^n:x\geq 0\}$.
For arbitrary $x,y \in \R^n$ define the relation \q{$\geq$} on $\R^n$ by:
$
x \geq y \ \Iff \   x_i \geq y_i,\ \forall i=1,\ldots,n.
$
%\[
%x < y \qiq \    x_i < y_i \quad \forall i=1,\ldots,n.
%\]
By \q{$\not\geq$} we understand the logical negation of \q{$\geq$}, that is $x \not\geq y$ iff $\exists i$: $x_i < y_i$.

By $|\cdot|$ we denote the Euclidean norm in finite-dimensional spaces, and by $\id$ we denote the identity operator.
For a function $v:\R_+\to X$, where $X$ is a certain set, we define its restriction to the
interval $[s_1,s_2]$ by
$v_{[s_1,s_2]}(t):=
\begin{cases}
  v(t) & \mbox{~if~}t\in[s_1,s_2],\\
  0 & \mbox{~else.}
\end{cases} $

By $\left\|v\right\|_{[0,t]}$ we denote the supremum norm of $v_{[0,t]}$, i.e. $\left\|v\right\|_{[0,t]}:=\sup_{s\in[0,t]}\|v(s)\|_X$.

By $\Uc:=PC_b(\R_+,U)$ we denote the space of globally bounded, piecewise continuous functions $u:\R_+ \to U$, which
are right continuous. The norm of $u \in \Uc$ is defined by $\|u\|_{\Uc}:=\sup_{t \geq 0}\|u(t)\|_U$. 
Also we use the following classes of comparison functions:
\begin{equation*}
\begin{array}{ll}
{\K} &:= \left\{\gamma:\R_+ \to \R_+ \ : \ \gamma\mbox{ is continuous and strictly increasing, }\gamma(0)=0\right\}\\
{\K_{\infty}}&:=\left\{\gamma\in\K \ :\ \gamma\mbox{ is unbounded}\right\}\\
{\LL}&:=\left\{\gamma:\R_+ \to \R_+ \ :\ \gamma\mbox{ is continuous and decreasing with}
 \lim\limits_{t\rightarrow\infty}\gamma(t)=0 \right\}\\
{\KL} &:= \left\{\beta: \R_+^2 \to \R_+ \ : \ \beta(\cdot,t)\in{\K},\ \forall t \geq 0,\  \beta(r,\cdot)\in {\LL},\ \forall r >0\right\}
\end{array}
\end{equation*}

%We use the following classes of comparison functions
%\begin{equation*}
%\begin{array}{ll}
%{\PD} &:= \left\{\gamma:\R_+\rightarrow\R_+\left|\ \gamma\mbox{ is continuous, } \right. \right. \\
%&\phantom{aaaaaaaaaaaaa}\left. \gamma(0)=0 \mbox{ and } \gamma(r)>0 \mbox{ for } r>0 \right\} \\
%{\K} &:= \left\{\gamma \in \PD \left|\ \gamma \mbox{ is strictly increasing}  \right. \right\}\\
%%{\K} &:= \left\{\gamma:\R_+\rightarrow\R_+\left|\ \gamma\mbox{ is continuous, }\gamma(0)=0\right.\right.\\
%%&\phantom{aaaaaaaaaaaaaaaaaaa}\left.\text{and strictly increasing}\right\}\\
%{\K_{\infty}}&:=\left\{\gamma\in\K\left|\ \gamma\mbox{ is unbounded}\right.\right\}\\
%{\LL}&:=\left\{\gamma:\R_+\rightarrow\R_+\left|\ \gamma\mbox{ is continuous and strictly}\right.\right.\\
%&\phantom{aaaaaaaaaaaaaa} \text{decreasing with } \lim\limits_{t\rightarrow\infty}\gamma(t)=0\}\\
%{\KL} &:= \left\{\beta:\R_+\times\R_+\rightarrow\R_+\left|\ \beta \mbox{ is continuous,}\right.\right.\\
%&\phantom{aaaaaaaaaa}\left.\beta(\cdot,t)\in{\K},\ \beta(r,\cdot)\in {\LL},\ \forall t,r \geq 0\right\} \\
%\end{array}
%\end{equation*}

\section{Problem formulation}
\label{Problem_Formulation}

\subsection{Definition of a control system}

We define the concept of a (time-invariant control) system in the following way:
\begin{definition}
\label{Steurungssystem}
Consider the triple $\Sigma=(X,\Uc,\phi)$ consisting of 
%\index{state space}
%\index{space of input values}
%\index{input space}
\begin{enumerate}[(i)]  
    \item A normed vector space $(X,\|\cdot\|_X)$, called the \emph{state space}, endowed with the norm $\|\cdot\|_X$.
    %\item A \emph{set of input values} $U$, which is a nonempty subset of a certain normed vector space.
    %\item \amc{A normed vector \emph{space of input values} $U$.}
    \item A normed vector \emph{space of inputs} $\Uc \subset \{u:\R_+ \to U\}$          
endowed with a norm $\|\cdot\|_{\Uc}$, where $U$ is a normed vector \emph{space of input values}.
We assume that the following two axioms hold:
                    
\emph{The axiom of shift invariance}: for all $u \in \Uc$ and all $\tau\geq0$ the time
shift $u(\cdot + \tau)$ belongs to $\Uc$ with \mbox{$\|u\|_\Uc \geq \|u(\cdot + \tau)\|_\Uc$}.
%the latter is used in the proof of Lemma 6.

\emph{The axiom of concatenation}: for all $u_1,u_2 \in \Uc$ and all $t>0$ the concatenation of $u_1$ and $u_2$ at time $t$, defined by
$\ccat{u_1}{u_2}{t}(\tau):=
\begin{cases}
u_1(\tau), & \text{ if } \tau \in [0,t], \\ 
u_2(\tau-t),  & \text{ otherwise},
\end{cases}$
%
%\begin{equation}
%\ccat{u_1}{u_2}{t}(\tau):=
%\begin{cases}
%u_1(\tau), & \text{ if } \tau \in [0,t], \\ 
%u_2(\tau-t),  & \text{ otherwise},
%\end{cases}
%\label{eq:Composed_Input}
%\end{equation}
belongs to $\Uc$.

    %\item A map $\phi:D_{\phi} \to X$, $D_{\phi}\subseteq \R_+ \times X \times \Uc$ (called \emph{transition map}), so that for all $(x,u)\in X \tm \Uc$ there is a $t>0$ so that $[0,t]\tm \{(x,u)\} \subset D_{\phi}$.
    \item A map $\phi:D_{\phi} \to X$, $D_{\phi}\subseteq \R_+ \times X \times \Uc$ (called \emph{transition map}), such that for all $(x,u)\in X \tm \Uc$ it holds that $D_{\phi} \cap \big(\R_+ \times \{(x,u)\}\big) = [0,t_m)\tm \{(x,u)\} \subset D_{\phi}$, for a certain $t_m=t_m(x,u)\in (0,+\infty]$.
        
        The corresponding interval $[0,t_m)$ is called the \emph{maximal domain of definition} of $t\mapsto \phi(t,x,u)$.
        
\end{enumerate}
The triple $\Sigma$ is called a \emph{(control) system}, if the following properties hold:

\begin{sysnum}
    \item\label{axiom:Identity} \emph{The identity property:} for every $(x,u) \in X \times \Uc$
          it holds that $\phi(0, x,u)=x$.
    \item \emph{Causality:} for every $(t,x,u) \in D_\phi$, for every $\tilde{u} \in \Uc$, such that $u(s) =
          \tilde{u}(s)$ for all $s \in [0,t]$ it holds that $[0,t]\tm \{(x,\tilde{u})\} \subset D_\phi$ and $\phi(t,x,u) = \phi(t,x,\tilde{u})$.
    \item \label{axiom:Continuity} \emph{Continuity:} for each $(x,u) \in X \times \Uc$ the map $t \mapsto \phi(t,x,u)$ is continuous on its maximal domain of definition.
        \item \label{axiom:Cocycle} \emph{The cocycle property:} for all
                  $x \in X$, $u \in \Uc$, for all $t,h \geq 0$ so that $[0,t+h]\times \{(x,u)\} \subset D_{\phi}$, we have
$\phi(h,\phi(t,x,u),u(t+\cdot))=\phi(t+h,x,u)$.
\end{sysnum}

\end{definition}

This class of systems encompasses control systems generated by ODEs, switched systems, time-delay systems,
many classes of PDEs, important classes of boundary control systems and many other systems. 

%Next we define several important properties of control systems:

\begin{definition}
\label{def:FC_Property} 
We say that a control system (as introduced in Definition~\ref{Steurungssystem}) is \emph{forward complete (FC)}, if 
$D_\phi = \R_+ \tm X\tm\Uc$, that is for every $(x,u) \in X \times \Uc$ and for all $t \geq 0$ the value $\phi(t,x,u) \in X$ is well-defined.
\end{definition}

\begin{remark}
\label{rem:Systems_classes_here_and_in_MiW18b} 
The class of forward complete control systems considered in this paper is precisely the class of control systems considered in \cite{MiW18b}, where forward completeness axiom was a part of a definition of a control system. Hence the results obtained in \cite{MiW18b} are applicable to forward complete systems considered in this paper.
\end{remark}

Recall an important property of equations in Banach spaces with bounded input operators and Lipschitz continuous right-hand sides (see \cite[Theorem 4.3.4]{CaH98}): 
if the solution stays bounded over $[0,t)$, then it can be prolonged to $[0,t+\varepsilon)$ for a certain $\varepsilon>0$.
The next property, adopted from \cite[Definition 1.4]{KaJ11b} formalizes this behavior for general control systems.
\begin{definition}
\label{def:BIC} 
We say that a system $\Sigma$ satisfies the \emph{boundedness-implies-continuation (BIC) property} if for each
$(x,u)\in X \tm \Uc$ such that the maximal existence time $t_{m}=t_m(x,u)$ is finite, and for all $M > 0$, there exists $t \in [0,t_m)$ with $\|\phi(t,x,u)\|_X>M$.
\end{definition}

\subsection{Interconnections of control systems}
\label{sec:Interconnections_definition}

Let $(X_i,\|\cdot\|_{X_i})$, $i=1,\ldots,n$ be normed vector spaces endowed with the corresponding norms.
Define for each  $i=1,\ldots,n$ the normed vector space
\vspace{-2mm}
\begin{eqnarray*}
 X_{\neq i}:=X_1 \times \ldots \times X_{i-1} \times X_{i+1} \times \ldots \times X_n, \qquad 
\|x\|_{X_{\neq i}} : = \Big(\textstyle\sum_{j=1,\ j\neq i}^n\|x_j\|^2_{X_j}\Big)^{\sfrac{1}{2}}.
%\label{eq:X_neq_i}
\end{eqnarray*}
%\vspace{-1mm}
Let control systems $\Sigma_i:=(X_i,PC_b(\R_+,X_{\neq i})\tm \Uc,\bar{\phi}_i)$ be given. 
We call $X_{\neq i}$ the \emph{space of internal input values} and $PC_b(\R_+,X_{\neq i})$ the \emph{space of internal inputs}.
The norm on $PC_b(\R_+,X_{\neq i})\tm \Uc$ we define as
\vspace{-2mm}
\begin{eqnarray}
\|(v,u)\|_{PC_b(\R_+,X_{\neq i})\tm \Uc} := \Big(\textstyle\sum_{j \neq i}\|v_j\|^2_{PC_b(\R_+,X_{j})} + \|u\|^2_{\Uc}\Big)^{\sfrac{1}{2}}.
\label{eq:Norm_Full_Input}
\end{eqnarray}
%\vspace{-1mm}
Define also the normed vector space
\vspace{-2mm}
\begin{eqnarray}
X=X_{1}\times\ldots\times X_{n}, \qquad \|x\|_X := \Big(\textstyle\sum_{i=1}^n\|x_i\|^2_{X_i}\Big)^{\sfrac{1}{2}},
\label{eq:State_Space_Full_System}
\end{eqnarray}
and assume that there is a map $\phi=(\phi_1,\ldots,\phi_n):D_\phi \to X$, defined over a certain domain $D_{\phi} \subseteq \R_+ \tm X \times \Uc$
so that for each $x=(x_1,x_2,\ldots,x_n) \in X$, each $u\in\Uc$ and all $t\in\R_+$ so that $(t,x,u)\in D_{\phi}$
and for every $i=1,\ldots,n$, it holds that 
\begin{eqnarray}
\phi_i(t,x,u) = \bar{\phi}_i\big(t,x_i,(v_i,u)\big),
\label{eq:Sigma_i_models_i_th_mode_of_Sigma}
\end{eqnarray}
where\qquad
$v_i(t) = (\phi_1(t,x,u),\ldots,\phi_{i-1}(t,x,u),\phi_{i+1}(t,x,u),\ldots,\phi_{n}(t,x,u)).
$
%\[
%v_i(t) = (\phi_1(t,x,u),\ldots,\phi_{i-1}(t,x,u),\phi_{i+1}(t,x,u),\ldots,\phi_{n}(t,x,u)).
%\]

Assume further that $\Sigma:=(X,\Uc,\phi)$ is a control system with the state space $X$, input space $\Uc$ and with a BIC property.
Then $\Sigma$ is called \emph{a (feedback) interconnection} of systems $\Sigma_1,\ldots,\Sigma_n$.

In other words, condition \eqref{eq:Sigma_i_models_i_th_mode_of_Sigma} means that if the modes $\phi_j(\cdot,x,u)$, $j\neq i$ of the system $\Sigma$ will be sent to $\Sigma_i$ as the internal inputs (together with an external input $u$), and the initial state will be chosen as $x_i$ (the $i$-th mode of $x$), then the resulting trajectory of the system $\Sigma_i$, which is $\bar{\phi}_i(\cdot,x_i,v,u)$ will coincide with the trajectory of the $i$-th mode of the system $\Sigma$ on the interval of existence of $\phi_i$.
\begin{remark}
\label{rem:Unboundedness-of-internal-inputs} 
Note that $\phi_j(\cdot,x,u)$ may be unbounded for any $j$, and thus $(v_i,u)$ does not have to be an admissible input to the system $\Sigma_i$. However, as $v_i$ is bounded on compact time intervals, and since $\Uc$ satisfies the concatenation property, for any $t>0$ it holds that $(\ccat{v_i}{0}{t},\ccat{u}{0}{t}) \in PC_{b}(\R_+,X_{\neq i}) \tm \Uc$, and we can extend the control system $\Sigma_i$ to a larger class of inputs of $\Sigma_i$ by defining 
$\bar{\phi}_i\big(t,x_i,(v_i,u)\big):=\bar{\phi}_i\big(t,x_i,(\ccat{v_i}{0}{t},\ccat{u}{0}{t})\big)$.
The obtained control system on a larger space of inputs could be called a causal extension of $\Sigma_i$ and will be denoted again by $\Sigma_i$. We understand $\Sigma_i$ in the sense of such a causal extension.
\end{remark}

Note that the trajectory of each $\Sigma_i$ depends continuously on time due to the continuity axiom. However, as the space of continuous functions does not satisfy the concatenation property, we enlarge it to include the piecewise continuous functions. This motivates the choice of the space $PC_b(\R_+,X_{\neq i})\tm \Uc$ as the input space for the $i$-th subsystem.

\begin{remark}
\label{rem:Interconnection Philosophy} 
This definition of feedback interconnections, which we adopted from \cite[Definition 3.3]{KaJ07}, does not depend on a particular type of control systems which are coupled, and applies to large-scale systems, consisting of heterogeneous components as PDEs, time-delay systems, ODE systems, etc.
The definition also applies to both in-domain and boundary interconnections of PDE systems.
\hfill$\square$
\end{remark}

Next, we show how the couplings of ODEs and of evolution equations in Banach spaces can be represented in our approach.
Many further classes of systems can be treated in a similar way.

\subsection{Example: Interconnections of ODE systems}
\label{sec:Example: interconnections of ODE systems}

Consider the interconnected systems of finitely many ODEs
\begin{equation}
\label{eq:Coupled_n_ODEs}
\dot{x}_{i} = f_{i}(x_{1},\ldots,x_{n},u), \qquad i=1,\ldots,n,
%\left\{ 
%\begin{array}{l}
%\dot{x}_{i} = f_{i}(x_{1},\ldots,x_{n},u),\\
%i=1,\ldots,n,
%\end{array}
%\right.  
\end{equation}
Here we assume that the state space of the $i$-th subsystem is $X_i:=\R^{p_i}$ endowed with the Euclidean norm. 
Define $N:=p_1 +\ldots + p_n$. 
Then the state space of the whole system, defined by \eqref{eq:State_Space_Full_System} is $X:=\R^N$, endowed with the Euclidean norm.
We assume that inputs belong to the space $\Uc:=L_{\infty}(\R_+,\R^{m})$.

Defining for $x_i \in \R^{p_i},\ i=1,\ldots, n$ the state 
$x=(x_{1},\ldots,x_{n})^T$, and $f(x,u)=(f_{1}(x,u),\ldots,f_{n}(x,u))^T$, we can rewrite the coupled system in the form
\begin{equation}
\label{eq:ODE_coupled_sys}
\dot{x}(t)=f(x(t),u(t)), \quad u(t) \in U.
\end{equation}
Assuming that $f$ is Lipschitz continuous w.r.t. the state, 
for each initial condition and for each external input $u\in\Uc$ there is a unique absolutely continuous (mild) solution of \eqref{eq:Coupled_n_ODEs} and \eqref{eq:Coupled_n_ODEs} is a well-defined control system, which is an interconnection of the systems $\Sigma_i$, $i=1,\ldots,n$.

\subsection{Example: Interconnections of retarded systems}
\label{sec:Example: interconnections of retarded systems}

Consider a coupled system of retarded differential equations of the form
\begin{equation}
\label{eq:time-delay-network}
\dot{x}_i(t)=f_i(x_{1,t}, \ldots,x_{n,t},u_t), \quad i=1,\ldots,n,
%\left\{ 
%\begin{array}{l}
%\dot{x}_i(t)=f_i(x_{1,t}, \ldots,x_{n,t},u_t), \quad t>0, \\
%i=1,\ldots,n,
%\end{array}
%\right.  
\end{equation}
where we denote $x_{i,t}(s):=x_i(t+s)$, $u_t(s)=u(t+s)$, $s\in[-T_d,0]$, for all $t\geq 0$,
and $T_d>0$ is the fixed (maximal) time-delay.

The state space for the $i$-th subsystem we choose as $X_i:= C([-T_d,0],\R^{p_i})$, endowed with the supremum norm, defined for any $x\in X_i$ by $\|x\|_{X_i}:=\sup_{t\in[-T_d,0]}|x_i(t)|.$

We assume here that $U:=\R^m$ and that input $u$ belongs to the space \linebreak
$\Uc:=L_\infty([-T_d,+\infty),U)$ of globally essentially bounded, measurable functions \linebreak ${u:[-T_d,+\infty) \to U}$. The norm of $u \in \Uc$ is given by
$\|u\|_{\Uc}:=\esssup_{t \geq -T_d}|u(t)|$. 

Define $N:=p_1 +\ldots + p_n$. 
The state space of the network \eqref{eq:time-delay-network} can be chosen as $X:= C([-T_d,0],\R^{N})$, again endowed with the supremum norm.

Defining for $x_i \in X_i,\ i=1,\ldots, n$ the state 
$x=(x_{1},\ldots,x_{n})^T$, $f(x_t,u_t)=(f_{1}(x_t,u_t),\ldots,f_{n}(x_t,u_t))^T$, we can rewrite the coupled system in the form
\begin{equation}
\label{eq:time-delay} 
\dot{x}(t)=f(x_t,u_t).
\end{equation}

We say that $\zeta \in C([-T_d,\tau],\R^N)$, $\tau>0$ is a \emph{solution of \eqref{eq:time-delay} on $[-T_d,\tau)$} subject to an initial condition $x \in X$ and an input $u\in\Uc$, if $\zeta$ is absolutely continuous on $[-T_d,\tau]$, satisfies the initial condition $\zeta(s) = x(s)$ for $s\in[-T_d,0]$ and the equation $\dot{\zeta}(t) = f(\zeta_t,u_t)$ holds almost everywhere on $(0,\tau)$.

With this solution concept, and under natural assumptions on the right-hand side $f$ one can show that the interconnection is well-posed and satisfies the BIC property.
We refer to \cite[Section 7]{MiP20} for details and further references.
%We say that $\zeta \in C([-T_d,+\infty),\R^n)$ is a \emph{solution of \eqref{eq:time-delay} on $[-T_d,+\infty)$} subject to an initial condition $x \in X$ and an input $u\in\Uc$, if $\zeta$ is a solution of \eqref{eq:time-delay} on $[-T_d,s)$ for each $s>0$.
%\end{definition}

\subsection{Example: Interconnections of evolution equations in Banach spaces}
\label{sec:Example: interconnections of evolution equations in Banach spaces}

Consider a system of the following form 
\begin{equation}
\label{Kopplung_N_Systeme}
\dot{x}_{i}=A_{i}x_{i}+f_{i}(x_{1},\ldots,x_{n},u),\qquad i=1,\ldots,n,
%\left\{ 
%\begin{array}{l}
%\dot{x}_{i}=A_{i}x_{i}+f_{i}(x_{1},\ldots,x_{n},u),\\
%i=1,\ldots,n,
%\end{array}
%\right.  
\end{equation}
where the state space of the $i$-th subsystem $X_{i}$ is a Banach space and $A_i$ with the domain of definition $D(A_i)$ is the generator of a
$C_{0}$-semigroup on $X_{i}$, $i=1,\ldots, n$.
In the sequel, we assume that the set of input values $U$ is a normed
linear space and that the input functions belong to space
$\Uc:=PC_b(\R_+,U)$.

Define the state space $X$ of the whole system \eqref{Kopplung_N_Systeme} by \eqref{eq:State_Space_Full_System}.
We choose further the input space to the $i$-th subsystem as \eqref{eq:Norm_Full_Input}.

For $x_i \in X_i,\ i=1,\ldots, n$ define
$x=(x_{1},\ldots,x_{n})^T$, $f(x,u)=(f_{1}(x,u),\ldots,f_{n}(x,u))^T$.
By $A$ we denote the diagonal operator $A:=\diag(A_{1},\ldots,A_{n})$.
%, i.e.:
%\[
%A=\left(
%\begin{array}{cccc}
%A_{1} & 0 & \ldots & 0\\
%0 & A_{2} & \ldots & 0\\
%\vdots & \vdots & \ddots & \vdots\\
%0 & 0 & \ldots & A_{n}
%\end{array}\right)
%\]
 Domain of definition of $A$ is given by $D(A)=D(A_{1})\times\ldots\times D(A_{n})$.
It is well-known that $A$ is the generator of a $C_{0}$-semigroup $\{T(t):t\geq 0\}$  of bounded operators on $X$.

With this notation the coupled system \eqref{Kopplung_N_Systeme} takes the form
\begin{equation}
\label{InfiniteDim}
\dot{x}(t)=Ax(t)+f(x(t),u(t)), \quad u(t) \in U.
\end{equation}
Assuming that $f$ is Lipschitz continuous w.r.t. $x$ guarantees that the mild solutions of \eqref{InfiniteDim} exists and is unique for every initial condition and for any admissible input. Here mild solutions $x:[0,\tau] \to X$ are the solutions of the integral equation
\begin{align}
\label{InfiniteDim_Integral_Form}
x(t)=T(t)x(0) + \int_0^t T(t-s)f(x(s),u(s))ds,
\end{align}
belonging to the space of continuous functions $C([0,\tau],X)$ for some $\tau>0$.

Under these assumptions the system \eqref{InfiniteDim} can be seen as a well-defined interconnection of the systems $\Sigma_i$, $i=1,\ldots,n$, and each $\Sigma_i$ is a well-defined system in the sense of Definition~\ref{Steurungssystem}.
Moreover, by a variation of \cite[Theorem 4.3.4]{CaH98} one can show that \eqref{InfiniteDim_Integral_Form} possesses the BIC property.

\section{Stability notions}
\label{sec:Stability_notions}

The main concept in this paper is:
\begin{definition}
\label{Def:ISS}
A system $\Sigma=(X,\Uc,\phi)$, with $\phi:D_\phi\to X$ is called \emph{(uniformly)  input-to-state stable
(ISS)}, if there exist $\beta \in \KL$ and $\gamma \in \Kinf\cup\{0\}$ 
so that %for all $(t,x,u) \in D_{\phi}$ it holds that
\begin {equation}
\label{iss_sum}
(t,x,u) \in D_{\phi} \qrq \| \phi(t,x,u) \|_{X} \leq \beta(\| x \|_{X},t) + \gamma( \|u\|_{\Uc}).
\end{equation}
\end{definition}
Another important property is  
\begin{definition}
\label{Def:0-UGAS}
A system $\Sigma=(X,\Uc,\phi)$, with $\phi:D_\phi\to X$ is called \emph{ uniformly globally asymptotically stable for a zero input (0-UGAS)}, if there is $\beta \in \KL$ so that 
%for all $(t,x,0) \in D_{\phi}$ it holds that
\begin {equation}
\label{eq:UGAS}
(t,x,0) \in D_{\phi}  \qrq \| \phi(t,x,0) \|_{X} \leq \beta(\| x \|_{X},t).
\end{equation}
\end{definition}
Clearly, ISS implies 0-UGAS.
Another important property implied by ISS is 
\begin{definition}
A system $\Sigma=(X,\Uc,\phi)$, with $\phi:D_\phi\to X$ is called
 \emph{ uniformly globally stable (UGS)}, if there exist $ \sigma \in\Kinf$ and $\gamma
          \in \Kinf \cup \{0\}$ such that 
					%for all $(t,x,u) \in D_{\phi}$ the following holds: 
\begin{equation}
\label{GSAbschaetzung}
(t,x,u) \in D_{\phi} \qrq \left\| \phi(t,x,u) \right\|_X \leq \sigma(\|x\|_X) + \gamma(\|u\|_{\Uc}).
\end{equation}
\end{definition}
For a normed linear space $W$ and any $r>0$ denote $B_{r,W} :=\{u \in W: \|u\|_W < r\}$. 
If $W$ is a state space $X$ we write simply $B_r$ instead of $B_{r,X}$.

A local counterpart of the UGS property is
\begin{definition}
A system $\Sigma=(X,\Uc,\phi)$, with $\phi:D_\phi\to X$ is called
 \emph{ uniformly locally stable (ULS)}, if there exist $ \sigma \in\Kinf$, $\gamma
          \in \Kinf \cup \{0\}$ and $r>0$ such that for all $x \in B_r$, $u\in \overline{B_{r,\Uc}}$ and all $t\geq 0$ so that $(t,x,u) \in D_{\phi}$, the estimate \eqref{GSAbschaetzung} holds.
\end{definition}

\begin{lemma}
\label{lem:ISS_UGS_and_BIC_property} 
Let $\Sigma=(X,\Uc,\phi)$ be a UGS control system. If $\Sigma$ has the BIC property, then $\Sigma$ is forward complete.
\end{lemma}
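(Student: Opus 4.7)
The plan is to argue by contradiction, using the BIC property to force a blow-up and then use the UGS estimate to rule it out.

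Suppose, for the sake of contradiction, that $\Sigma$ is not forward complete. Then there exist $(x,u)\in X\tm\Uc$ such that $(t,x,u)\notin D_\phi$ for some $t\ge 0$. By the BIC property, the corresponding maximal existence time $t_{\max}=t_{\max}(x,u)$ satisfies $t_{\max}<\infty$, $[0,t_{\max})\tm\{(x,u)\}\subset D_\phi$, and moreover for every $M>0$ there is some $t\in[0,t_{\max})$ with $\|\phi(t,x,u)\|_X>M$.

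On the other hand, for every $t\in[0,t_{\max})$ the triple $(t,x,u)$ lies in $D_\phi$, so the UGS estimate \eqref{GSAbschaetzung} applies and yields
\[
\|\phi(t,x,u)\|_X \le \sigma(\|x\|_X)+\gamma(\|u\|_{\Uc}).
\]
Set $M:=\sigma(\|x\|_X)+\gamma(\|u\|_{\Uc})+1$, which is finite because $x\in X$ and $u\in\Uc$. Then $\|\phi(t,x,u)\|_X\le M-1<M$ for all $t\in[0,t_{\max})$, contradicting the unboundedness guaranteed by BIC. Hence $t_{\max}=\infty$ for every $(x,u)\in X\tm\Uc$, which is exactly forward completeness.

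I do not foresee any real obstacle here; the lemma is essentially a direct matching of the two hypotheses, and the only thing worth being careful about is invoking the UGS inequality only on the subset $[0,t_{\max})\tm\{(x,u)\}\subset D_\phi$ where it is guaranteed to hold, rather than on all of $\R_+$.
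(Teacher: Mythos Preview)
Your proof is correct and follows essentially the same approach as the paper: assume a finite maximal existence time, use the UGS estimate to bound the trajectory on $[0,t_{\max})$, and derive a contradiction with the blow-up clause of the BIC property. The paper's version is slightly terser (it simply notes $\limsup_{t\to t^*}\|\phi(t,x,u)\|_X<\infty$ rather than naming an explicit $M$), but the argument is the same.
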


\begin{proof}
Pick any $x \in X$ and $u\in\Uc$. Then there is a maximal existence time $t_m$ so that $(t,x,u)\in D_\phi$ for all $t \in [0, t^*)$.
Assume that $t_m < \infty$. As $\Sigma$ is UGS, $\limsup_{t \to t_m-0}\|\phi (t, x, u)\|_X<\infty$, and we obtain a contradiction to the BIC property of $\Sigma$. Hence $t_m=+\infty$ and $\Sigma$ is forward complete.
\end{proof}

For forward complete systems we introduce the following asymptotic properties
\begin{definition}
A forward complete system $\Sigma=(X,\Uc,\phi)$ has the
\begin{itemize}%[leftmargin=*]
    \item[(i)] \emph{ asymptotic gain (AG) property}, if there is $
          \gamma \in \Kinf  \cup \{0\}$ such that for all $\eps >0$, 
          all $x \in X$ and for all $u \in \Uc$ there is a
          $\tau=\tau(\eps,x,u) < \infty$ such that 
\begin{equation}
\label{AG_Absch}
t \geq \tau\ \quad \Rightarrow \quad \|\phi(t,x,u)\|_X \leq \eps + \gamma(\|u\|_{\Uc}).
\end{equation}

  \item[(ii)] \emph{ strong asymptotic gain (sAG) property}, if there is $
    \gamma \in \Kinf \cup \{0\}$ such that for all $x \in X$ and all
    $\eps >0$ there is a $\tau=\tau(\eps,x) < \infty $ such that 
for all $u \in \Uc$ the estimate \eqref{AG_Absch} holds.
%
%\begin{equation}
%\label{sAG_Absch}
%t \geq \tau \quad \Rightarrow \quad \|\phi(t,x,u)\|_X \leq \eps + \gamma(\|u\|_{\Uc}).
%\end{equation}

\item[(iii)] \emph{ bounded input uniform asymptotic gain (bUAG) property}, if there
          exists $ \gamma \in \Kinf \cup \{0\}$ such that for all $ \eps, r
          >0$ there is a $ \tau=\tau(\eps,r) < \infty$ such
          that for all $u \in \Uc$: $\|u\|_{\Uc}\leq r$ and all $x \in B_{r}$
					 the estimate \eqref{AG_Absch} holds.
%\begin{equation}    
%\label{UAG_Absch}
%t \geq \tau \quad \Rightarrow \quad \|\phi(t,x,u)\|_X \leq \eps + \gamma(\|u\|_{\Uc}).
%\end{equation}

    \item[(iv)] \emph{ uniform asymptotic gain (UAG) property}, if there
          exists  $ \gamma \in \Kinf \cup \{0\}$ such that for all $ \eps, r
          >0$ there is a $ \tau=\tau(\eps,r) < \infty$ such
          that for all $u \in \Uc$ and all $x \in B_{r}$ the implication \eqref{AG_Absch} holds.

\end{itemize}
\end{definition}

All types of asymptotic gain properties imply that all trajectories converge to the ball of radius $\gamma(\|u\|_{\Uc})$ around the origin as $t \to \infty$. The difference between AG, sAG, bUAG, and UAG is in the kind of dependence of
$\tau$ on the states and inputs. 
In UAG systems this time depends (besides $\eps$) only on the norm of the state, in sAG systems, it depends on the state $x$ (and may vary for different states with the same norm), but it does not depend on $u$. In AG systems $\tau$ depends both on $x$ and on $u$.

The following lemma shows how bUAG property can be \q{upgraded} to the UAG and ISS properties. This result we will need for development of the small-gain theorems. Later in Theorem~\ref{thm:New_ISS_Characterizations} we show a much stronger characterization of ISS.
\begin{lemma}
\label{lem:UGS_und_bUAG_imply_UAG}
Let $\Sigma=(X,\Uc,\phi)$ be a control system with a BIC property. If $\Sigma$ is UGS and bUAG, then $\Sigma$ is forward compete, UAG and ISS.
\end{lemma}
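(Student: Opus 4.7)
The plan proceeds in three stages, matching the three conclusions of the lemma.

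First, forward completeness is essentially immediate: we already have Lemma~\ref{lem:ISS_UGS_and_BIC_property} stating that a UGS system with the BIC property is forward complete, so applying it directly to $\Sigma$ gives $D_\phi = \R_+\tm X\tm\Uc$. This lets us freely use the bUAG and UGS estimates for all $t\geq0$ in what follows.

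Second, I would upgrade bUAG to UAG by a case split on the size of $\|u\|_\Uc$ relative to the state bound. Let $\sigma\in\Kinf$ and $\gamma_1\in\Kinf\cup\{0\}$ come from UGS, and let $\gamma_2\in\Kinf\cup\{0\}$ be the gain from bUAG. Fix $\eps,r>0$. For inputs with $\|u\|_\Uc\leq r$, bUAG directly supplies a time $\tau=\tau(\eps,r)$ such that $\|\phi(t,x,u)\|_X\leq \eps+\gamma_2(\|u\|_\Uc)$ for all $t\geq\tau$ and all $\|x\|_X\leq r$. For inputs with $\|u\|_\Uc>r$, I would not wait at all: the UGS estimate gives
\[
\|\phi(t,x,u)\|_X\leq \sigma(\|x\|_X)+\gamma_1(\|u\|_\Uc)\leq \sigma(r)+\gamma_1(\|u\|_\Uc)\leq \sigma(\|u\|_\Uc)+\gamma_1(\|u\|_\Uc)
\]
for every $t\geq0$ and every $\|x\|_X\leq r$, since $r<\|u\|_\Uc$ and $\sigma$ is increasing. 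Defining the new gain $\gamma(s):=\max\{\gamma_2(s),\sigma(s)+\gamma_1(s)\}\in\Kinf\cup\{0\}$, both cases yield $\|\phi(t,x,u)\|_X\leq \eps+\gamma(\|u\|_\Uc)$ for all $t\geq \tau(\eps,r)$, which is exactly UAG with gain $\gamma$. This trick of absorbing the $\sigma(r)$ contribution into the input gain in the large-input regime is, I expect, the main technical content of the argument, and it uses the UGS hypothesis in an essential way.

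Third, I would invoke the ISS characterization from \cite{MiW18b} — which, by Remark~\ref{rem:Systems_classes_here_and_in_MiW18b}, applies verbatim to the forward complete system $\Sigma$ — to conclude that UGS together with UAG implies ISS. Since we have just established UAG, and UGS is given, this finishes the proof. The only subtlety worth double-checking is that the UAG gain and UGS gain obtained above are compatible with the precise hypotheses of the characterization used, but this is guaranteed because both gains lie in $\Kinf\cup\{0\}$ and the BIC property has already been promoted to forward completeness in the first step.
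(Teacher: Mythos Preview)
Your proposal is correct and follows essentially the same approach as the paper's proof: forward completeness via Lemma~\ref{lem:ISS_UGS_and_BIC_property}, the upgrade from bUAG to UAG by the case split on $\|u\|_\Uc\leq r$ versus $\|u\|_\Uc>r$ (using the UGS bound and $\sigma(r)\leq\sigma(\|u\|_\Uc)$ in the large-input case), and then invoking the characterization from \cite{MiW18b} to get ISS. The only cosmetic difference is that the paper merges the UGS and bUAG gains at the outset and ends with the asymptotic gain $\gamma+\sigma$, whereas you keep $\gamma_1,\gamma_2$ separate and combine via a maximum at the end.
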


\begin{proof}
As $\Sigma$ satisfies BIC property and is UGS, then it is forward complete by Lemma~\ref{lem:ISS_UGS_and_BIC_property}  (in particular, the property bUAG assumed for $\Sigma$ makes sense).
 
Pick arbitrary $\eps,r>0$. Let $\tau$ and $\gamma$ be as in the formulation of the bUAG property and pick any $x \in B_r$ and $u\in\Uc$. 
If $\|u\|_\Uc\leq r$, then \eqref{AG_Absch} is the desired estimate.

Let $\|u\|_\Uc > r$. As $\Sigma$ is UGS, we have $\|\phi(t,x,u)\|_X \leq \sigma(\|x\|_X) + \gamma(\|u\|_{\Uc})$ for all $t\geq 0$. 
Here we assume that $\gamma$ is same as in the definition of a bUAG property (otherwise pick the maximum of both).

As $\|u\|_\Uc > r\geq \|x\|_X$, we obtain $\|\phi(t,x,u)\|_X \leq \sigma(\|u\|_\Uc) + \gamma(\|u\|_{\Uc})$,
and thus
\begin{align*}    
%\label{UAG_Absch_3}
x \in B_r \ \wedge \ u\in \Uc \ \wedge \ t \geq \tau \qrq \|\phi(t,x,u)\|_X \leq \eps + \gamma(\|u\|_{\Uc}) +\sigma(\|u\|_\Uc),
\end{align*}
which shows UAG property with the asymptotic gain $\gamma + \sigma$.

As $\Sigma$ is forward complete, UAG and UGS, the ISS property of $\Sigma$ follows by \cite[Theorem 5]{MiW18b}.
\end{proof}

\section{Coupled systems and gain operators}
\label{sec:Gain_operators_SGC}

In this subsection we consider $n$ systems $\Sigma_i:=(X_i,PC_b(\R_+,X_{\neq i}) \tm \Uc,\bar{\phi}_i)$, $i=1,\ldots,n$, where all $X_i$, $i=1,\ldots,n$ and $\Uc$ are normed  linear spaces. Furthermore, we assume that all $\Sigma_i$, $i=1,\ldots,n$ are forward complete.

Stability properties introduced in Section~\ref{sec:Stability_notions} are defined in terms of the norms of the whole input, and this is not suitable for the consideration of coupled systems, as we are interested not only in the collective influence of all inputs over a subsystem but in the influence of particular subsystems over a given subsystem.

Therefore we reformulate the ISS property for a subsystem in the following form:
\begin{lemma}
\label{lem:ISS_reformulation_n_systems}
A forward complete system \emph{$\Sigma_i$ is ISS (in summation formulation)} if and only if there exist $\gamma_{ij}, \gamma_i \in \Kinf\cup\{0\},\ j=1,\ldots,n$ and $\beta_i \in \KL$, such that for all initial values $x_i \in X_i$, all internal inputs 
$w_{\neq i} := (w_1,\ldots,w_{i-1}, w_{i+1},\ldots,w_n) \in PC_b(\R_+,X_{\neq i})$, all external inputs $u \in\Uc$
and all $t \in\R_+$ the following estimate holds:
\begin{eqnarray}
\label{eq:ISS_n_sys_sum}
\|\bar{\phi}_i\big(t,x_i,(w_{\neq i}, u)\big)\|_{X_i}  \leq
 \beta_i\left(\left\|x_i\right\|_{X_i},t\right) + \textstyle\sum_{j\neq i}\gamma_{ij}\big(\left\|w_j\right\|_{[0,t]}\big) + \gamma_i\left(\left\|u\right\|_{\Uc}\right).
\end{eqnarray}
\end{lemma}
The proof of this lemma is similar to the proof of \cite[Lemma 2.4.1]{Mir12}, and is omitted.

The functions $\gamma_{ij}$ and $\gamma_i$ in the statement of Lemma~\ref{lem:ISS_reformulation_n_systems} are called \emph{(nonlinear) gains}.
For notational simplicity we allow the case $\gamma_{ij}\equiv 0$ and
require $\gamma_{ii}\equiv 0$ for all $i$.

Analogously, one can restate the definitions of UGS, AG and bUAG properties:
\begin{lemma}
    \label{lem:UGS_reformulation_n_systems}
$\Sigma_i$  is UGS (in summation formulation) if and only if there exist $\gamma_{ij},\ \gamma_i \in \Kinf\cup\{0\}$ and $\sigma_i \in \Kinf$, such that for all initial values $x_i\in X_i$, all internal inputs $w_{\neq i} := (w_1,\ldots,w_{i-1}, w_{i+1},\ldots,w_n) \in PC_b(\R_+,X_{\neq i})$, all external inputs $u \in\Uc$ and all $t \in\R_+$ the following inequality holds
\begin{eqnarray}
\label{eq:UGS_n_sys_sum}
\|\bar{\phi}_i\big(t,x_i,(w_{\neq i}, u)\big)\|_{X_i}  \leq
 \sigma_i\left(\left\|x_i\right\|_{X_i}\right) + \textstyle\sum_{j\neq i}\gamma_{ij}\big(\left\|w_j\right\|_{[0,t]}\big) + \gamma_i\left(\left\|u\right\|_{\Uc}\right).
\end{eqnarray}
\end{lemma}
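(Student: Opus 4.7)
The plan is to mirror, step-for-step, the proof of Lemma~\ref{lem:ISS_reformulation_n_systems} just above, replacing the $\mathcal{KL}$-comparison of the state term by a $\mathcal{K}_\infty$-comparison as dictated by the UGS definition. Because the statement is an ``if and only if,'' two implications have to be handled.

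For the forward direction, I would start from the UGS estimate applied to $\Sigma_i$: there exist $\sigma_i\in\mathcal{K}_\infty$ and $\gamma\in\mathcal{K}_\infty\cup\{0\}$ with
\[
\|\bar{\phi}_i(t,x_i,(w_{\neq i},u))\|_{X_i}\le \sigma_i(\|x_i\|_{X_i})+\gamma\bigl(\|(w_{\neq i},u)\|_{PC(\R_+,X_{\neq i})\tm\Uc}\bigr).
\]
Next I would unfold the composite norm via the definition in \eqref{eq:Norm_Full_Input}, bound the Euclidean aggregate by the plain sum of norms, and then apply the elementary estimate $\gamma(s_1+\cdots+s_n)\le \gamma(ns_1)+\cdots+\gamma(ns_n)$ valid for any $\gamma\in\mathcal{K}$ and nonnegative $s_j$. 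Defining $\gamma_{ij}(r):=\gamma(nr)$ for $j\neq i$, $\gamma_{ii}\equiv 0$, and $\gamma_i(r):=\gamma(nr)$ yields the sum-form bound, and invoking causality of $\Sigma_i$ replaces each $\|w_j\|_{PC(\R_+,X_j)}$ by the truncation $\|w_j\|_{[0,t]}$, producing \eqref{eq:UGS_n_sys_sum}.

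For the converse, I would assume \eqref{eq:UGS_n_sys_sum} holds with some $\sigma_i\in\mathcal{K}_\infty$, $\gamma_{ij},\gamma_i\in\mathcal{K}\cup\{0\}$, set $\bar{\gamma}(r):=\gamma_i(r)+\sum_{j\neq i}\gamma_{ij}(r)$ (which lies in $\mathcal{K}_\infty\cup\{0\}$ if we additionally majorize by a $\mathcal{K}_\infty$ function, as is standard), and estimate
\[
\sum_{j\neq i}\gamma_{ij}(\|w_j\|_{[0,t]})+\gamma_i(\|u\|_\Uc)\le \bar\gamma\Bigl(\max_{j\neq i}\{\|w_j\|_{[0,t]},\|u\|_\Uc\}\Bigr)\le \bar\gamma\bigl(\|(w_{\neq i},u)\|_{PC(\R_+,X_{\neq i})\tm\Uc}\bigr),
\]
where the last step uses that the Euclidean composite norm in \eqref{eq:Norm_Full_Input} dominates each individual component norm. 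Combining with $\sigma_i(\|x_i\|_{X_i})$ recovers the single-input UGS estimate \eqref{GSAbschaetzung} for $\Sigma_i$.

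No part of this is genuinely difficult; the argument is a verbatim adaptation of the ISS case to the UGS case, the only substantive change being that $\beta_i(\|x_i\|_{X_i},t)$ is replaced throughout by $\sigma_i(\|x_i\|_{X_i})$. The only mild bookkeeping point is to ensure the gain used in the converse direction lies in $\mathcal{K}_\infty\cup\{0\}$ as demanded by the UGS definition, which is achieved either because all $\gamma_{ij},\gamma_i$ already lie in $\mathcal{K}_\infty\cup\{0\}$ or, failing that, by majorizing the finite sum by a suitable $\mathcal{K}_\infty$ function, an entirely routine step.
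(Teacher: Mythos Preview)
Your proposal is correct and takes exactly the approach the paper intends: the paper does not give a proof of this lemma but explicitly says the UGS (and AG, bUAG) restatements are obtained ``analogously'' to Lemma~\ref{lem:ISS_reformulation_n_systems} and are left without proof. Your argument is precisely that analogous adaptation, with the $\beta_i(\|x_i\|_{X_i},t)$ term replaced by $\sigma_i(\|x_i\|_{X_i})$, so nothing further is needed.
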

\vspace{-4mm}
\begin{lemma}
\label{lem:AG_reformulation_n_systems}
$\Sigma_i$ is AG if and only if there exist $\gamma_{ij},\ \gamma_i \in \Kinf\cup\{0\}$, such that for all $x_i \in X_i$, $u\in\Uc$, $w_{\neq i} := (w_1,\ldots,w_{i-1}, w_{i+1},\ldots,w_n) \in PC_b(\R_+,X_{\neq i})$, $\eps>0$ there is a time $ \tau_i:=\tau_i(x_i,u,w_{\neq i},\varepsilon) < \infty$ so that it holds that
\begin{equation}    
\label{UAG_Absch_ith_subsystem}
t \geq \tau_i \quad \Rightarrow \quad \|\bar{\phi}_i\big(t,x_i,(w_{\neq i}, u)\big)\|_{X_i} \leq \eps + \textstyle\sum_{j\neq i}\gamma_{ij}\left(\left\|w_j\right\|_{\infty}\right) + \gamma_i(\|u\|_{\Uc}).
\end{equation}
\end{lemma}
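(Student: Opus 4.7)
The proof mirrors the reformulation argument already carried out for ISS in Lemma~\ref{lem:ISS_reformulation_n_systems}, with only the quantifier structure of AG replacing the $\KL$-bound. The plan is to establish the two implications separately, keeping the same time instant $\tau_i$ on both sides so that only the right-hand-side bound needs to be manipulated.

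For the forward direction, I would assume $\Sigma_i$ is AG on the input space $PC(\R_+,X_{\neq i})\tm \Uc$ equipped with the product norm \eqref{eq:Norm_Full_Input}, so that there exists $\gamma\in\Kinf\cup\{0\}$ such that for every $\eps>0$ and every $(x_i,w_{\neq i},u)$ there is $\tau_i=\tau_i(\eps,x_i,w_{\neq i},u)<\infty$ with
\[
t\geq\tau_i\ \Rightarrow\ \|\bar{\phi}_i(t,x_i,(w_{\neq i},u))\|_{X_i}\leq \eps+\gamma\bigl(\|(w_{\neq i},u)\|_{PC(\R_+,X_{\neq i})\tm\Uc}\bigr).
\]
From the definition of the product norm, the $\ell^2$-combination is dominated by the corresponding $\ell^1$-combination of the component norms $\|w_j\|_{PC(\R_+,X_j)}=\|w_j\|_\infty$ and $\|u\|_\Uc$. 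Applying the elementary inequality $\gamma(s_1+\cdots+s_n)\leq\gamma(ns_1)+\cdots+\gamma(ns_n)$ (as in the ISS lemma), and setting $\gamma_{ij}(r):=\gamma(nr)$, $\gamma_i(r):=\gamma(nr)$, yields \eqref{UAG_Absch_ith_subsystem} with the same $\tau_i$.

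For the converse, I would start from \eqref{UAG_Absch_ith_subsystem} and define the aggregated gain $\gamma(r):=\gamma_i(r)+\sum_{j\neq i}\gamma_{ij}(r)$, $r\in\R_+$. Then, as in the ISS lemma, one rewrites
\[
\sum_{j\neq i}\gamma_{ij}(\|w_j\|_\infty)+\gamma_i(\|u\|_\Uc)\leq \gamma\Bigl(\max_{j\neq i}\{\|w_j\|_\infty,\|u\|_\Uc\}\Bigr)\leq\gamma\bigl(\|(w_{\neq i},u)\|_{PC(\R_+,X_{\neq i})\tm\Uc}\bigr),
\]
which gives the standard AG estimate with gain $\gamma$ and the same $\tau_i$.

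There is essentially no obstacle: both steps are norm-splitting bookkeeping, and the distinction with the ISS reformulation is merely that in the AG definition $\tau$ is allowed to depend on the input itself, so the restriction $\|w_j\|_{[0,t]}$ can safely be replaced by the full $\|w_j\|_\infty$ without losing quantifier-uniformity. The appearance of $\beta_i\in\KL$ in the statement plays no role in the bound (it is a harmless artifact of the AG property not needing a transient term); it can be included trivially by choosing any $\beta_i\in\KL$, e.g.\ $\beta_i\equiv 0$, so no argument is required for it.
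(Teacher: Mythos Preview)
Your proposal is correct and follows exactly the approach the paper intends: the paper states this lemma (together with the UGS and bUAG reformulations) without proof, merely indicating that the argument is analogous to the ISS reformulation in Lemma~\ref{lem:ISS_reformulation_n_systems}, which is precisely what you carry out. Your observation that the $\beta_i\in\KL$ in the statement is an inert artifact is also accurate.
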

\vspace{-4mm}
\begin{lemma}
\label{lem:bUAG_reformulation_n_systems}
$\Sigma_i$ is bUAG if and only if there exist $\gamma_{ij},\ \gamma_i \in \K\cup\{0\}$, such that for all $\eps>0$, for all $r>0$ there is $\tau_i=\tau_i(\eps,r) < \infty$ such that 
for all $u \in \Uc$: $\|u\|_{\Uc}\leq r$ and all $x_i \in B_{r}(0,X_i)$, for all 
$w_{\neq i} := (w_1,\ldots,w_{i-1}, w_{i+1},\ldots,w_n) \in PC_b(\R_+,X_{\neq i})$: $\|w_j\|_{\infty}\leq r$ the implication \eqref{UAG_Absch_ith_subsystem} holds.
\end{lemma}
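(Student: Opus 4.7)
The plan is to mirror the proof of Lemma~\ref{lem:ISS_reformulation_n_systems}, simply replacing the role of the decaying term $\beta_i(\|x_i\|_{X_i},t)$ by the $\eps$-threshold after the uniform waiting time. The key tools are the two elementary inequalities $\sqrt{a_1^2+\cdots+a_n^2}\leq a_1+\cdots+a_n$ for $a_k\geq 0$ and $\gamma(s_1+\cdots+s_n)\leq\gamma(ns_1)+\cdots+\gamma(ns_n)$ for $\gamma\in\K$, together with the fact that the joint norm \eqref{eq:Norm_Full_Input} dominates each coordinate norm.

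For the forward implication, I would assume bUAG of $\Sigma_i$ with gain $\gamma\in\Kinf\cup\{0\}$ and time $\tau(\eps,\rho)$. Fix $\eps,r>0$ and take any $x_i\in B_r(0,X_i)$, any $u\in\Uc$ with $\|u\|_\Uc\leq r$, and any $w_{\neq i}$ with $\|w_j\|_\infty\leq r$ for all $j\neq i$. Then the combined input norm satisfies $\|(w_{\neq i},u)\|_{PC(\R_+,X_{\neq i})\tm\Uc}\leq\sqrt{n}\,r$, so setting $\tau_i(\eps,r):=\tau(\eps,\sqrt{n}\,r)$ and applying bUAG yields, for $t\geq\tau_i(\eps,r)$, the estimate $\|\bar\phi_i(t,x_i,(w_{\neq i},u))\|_{X_i}\leq\eps+\gamma(\|(w_{\neq i},u)\|_{PC\tm\Uc})$. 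Bounding the argument of $\gamma$ by $\sum_{j\neq i}\|w_j\|_\infty+\|u\|_\Uc$ and then splitting via the subadditivity trick, one reads off the individual gains $\gamma_{ij}(s):=\gamma(ns)$ and $\gamma_i(s):=\gamma(ns)$, which all belong to $\K\cup\{0\}$.

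For the converse implication, assume the summation-form bUAG estimate \eqref{UAG_Absch_ith_subsystem} holds with gains $\gamma_{ij},\gamma_i\in\K\cup\{0\}$ and times $\tau_i(\eps,r)$. Given $\eps,r>0$ and a pair $(x_i,(w_{\neq i},u))$ with $\|x_i\|_{X_i}\leq r$ and $\|(w_{\neq i},u)\|_{PC\tm\Uc}\leq r$, the coordinate-wise domination $\|w_j\|_\infty\leq\|(w_{\neq i},u)\|_{PC\tm\Uc}\leq r$ and $\|u\|_\Uc\leq\|(w_{\neq i},u)\|_{PC\tm\Uc}\leq r$ allows me to invoke \eqref{UAG_Absch_ith_subsystem} directly. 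Setting $\tilde\gamma(s):=\gamma_i(s)+\sum_{j\neq i}\gamma_{ij}(s)$, the right-hand side is bounded by $\eps+\tilde\gamma(\|(w_{\neq i},u)\|_{PC\tm\Uc})$, which is the desired bUAG estimate; if $\tilde\gamma\in\K\setminus\Kinf$ one dominates it by any $\Kinf$ majorant to fit the definition, and if all component gains are identically zero one takes $\tilde\gamma=0$.

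The argument is essentially bookkeeping; the only place where one has to be slightly careful is in the forward direction, where the radius for the joint input is $\sqrt{n}\,r$ rather than $r$, so $\tau_i$ must be defined using this enlarged radius. This is the same mild inflation-of-constants issue that appears in the proofs of Lemmas~\ref{lem:ISS_reformulation_n_systems} and~\ref{lem:UGS_reformulation_n_systems}, and does not affect the structure of the equivalence.
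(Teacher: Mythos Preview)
Your proposal is correct and follows precisely the route the paper intends: the paper states that the UGS, AG and bUAG restatements are analogous to Lemma~\ref{lem:ISS_reformulation_n_systems} and leaves them without proof, and your argument is exactly that analogy carried out in detail. Your observation that in the forward direction the radius for the joint input must be inflated to $\sqrt{n}\,r$ before invoking bUAG is the one point that genuinely distinguishes this case from the ISS reformulation, and you handle it correctly.
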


In the above definitions $w_j$, $j=1,\ldots,n$ are general inputs, which are not necessarily related to the states of other subsystems (i.e. we have considered all $\Sigma_i$ as disconnected systems).
Now assume that
\begin{ass}
\label{ass:Interconnection-is-well-posed}
The interconnection of forward complete systems $\Sigma_1,\ldots,\Sigma_n$ as introduced in Section~\ref{sec:Interconnections_definition}, which we call $\Sigma:=(X,\Uc,\phi)$, is a well-defined control system with a BIC property.
\end{ass}

Pick arbitrary $x \in X$ and $u\in\Uc$ and define for $i=1,\ldots,n$ the quantities
\begin{eqnarray}
\phi_i:=\phi_i(\cdot,x,u),\quad \phi_{\neq i} = (\phi_1,\ldots,\phi_{i-1}, \phi_{i+1},\ldots,\phi_n).
\label{eq:phi_neq_i}
\end{eqnarray}
We rewrite the definitions of ISS and UGS, specialized for the inputs $w_j:=\phi_j$, in a shorter vectorized form, using the shorthand notation from \cite{DRW07}, introduced next.

For vector functions $w=(w_1,\ldots,w_n)^T:\R_+\to X_1\times \ldots \times X_n$ such that\linebreak[3]
$w_i \in C(\R_+, X_i), i=1,\ldots,n$ and times $0\leq t_1 \leq t_2$ we
define
\[
\supnorm{w}{[t_1,t_2]}
:= \big( \|w_{i,[t_1,t_2]}\|_\infty\big)_{i=1}^n
  %\begin{pmatrix}
    %\|w_{1,[t_1,t_2]}\|_\infty\\
    %\vdots\\
    %\|w_{n,[t_1,t_2]}\|_\infty
  %\end{pmatrix}
	\in\R^n_+.
\]
Furthermore, we introduce for all $t,u$ and all $x=(x_i)_{i=1}^n \in X$ the following notation:
  \begin{gather}
        \label{eq:vector_gamma}
    \einsnorm{\bar{\phi}}{t,x,u}
    := \big(\|\bar{\phi}_i(t,x_i,(\phi_{\neq i}, u))\|_{X_i}\big)_{i=1}^n
    %\left(
      %\begin{array}{c}
        %\|\bar{\phi}_1(t,x_1,(\phi_{\neq 1}, u))\|_{X_1}\\
        %\vdots\\
        %\|\bar{\phi}_n(t,x_n,(\phi_{\neq n}, u))\|_{X_n}
      %\end{array}
    %\right)
		,
    \ \
   \vec{\gamma}(\|u\|_\Uc)
    :=  \big(\gamma_i(\|u\|_\Uc)\big)_{i=1}^n
    %\left(
      %\begin{array}{c}
        %\gamma_1(\|u\|_\Uc) \\
        %\vdots\\
        %\gamma_n(\|u\|_\Uc)
      %\end{array}
    %\right)
		,\\
    %\label{eq:definition beta}
    \mbox{and} \quad        
   \vec{\sigma}(s)
    :=  \big(\sigma_i(s_i)\big)_{i=1}^n
    %\left(
      %\begin{array}{c}
        %\sigma_1(s_1)\\
        %\vdots\\
        %\sigma_n({\color{blue}s_n})
      %\end{array}
    %\right)
		,
\ \        
    \vec{\beta}(s,t)
    :=    \big(\beta_i(s_i,t)\big)_{i=1}^n
    %\left(
      %\begin{array}{c}
        %\beta_1(s_1,t)\\
        %\vdots\\
        %\beta_n(s_n,t)
      %\end{array}
    %\right)
		,
        \  \forall s =(s_i)_{i=1}^n \in\R^n_+,\ t\geq 0.
  \end{gather}

We are going to collect all the internal gains in the matrix $\Gamma:=(\gamma_{ij})_{i,j=1,\ldots,n}$, which we call the \emph{gain matrix}. If the gains are taken from the ISS restatement \eqref{eq:ISS_n_sys_sum}, then we call the corresponding gain matrix $\Gamma^{ISS}$. Analogously, the gain matrices $\Gamma^{UGS}$, $\Gamma^{AG}$, $\Gamma^{UAG}$ are defined.

Now for a given gain matrix $\Gamma$ define the operator $\Gamma_{\boxplus}:\R^n_+\to\R^n_+$ by
\begin{equation}
\Gamma_{\boxplus}(s) := \Big(
  \textstyle\sum_{j=1}^n \gamma_{1j}(s_j),\ldots,\textstyle\sum_{j=1}^n \gamma_{nj}(s_j)
\Big)^T,   \quad s=(s_1,\ldots,s_n)^T\in\R^n_+.
\label{eq:ps_gamma}
\end{equation}

Again, to emphasize that the gains are from the ISS restatement \eqref{eq:ISS_n_sys_sum},
the corresponding gain operator will be denoted by $\Gamma_\boxplus^{ISS}$.

Note that by the
properties of $\gamma_{ij}$ for $s_1,s_2\in\R_+^n$ we have the implication
\begin{equation}\label{eq:monot_Gamma_plus}
s_1\ge s_2\qrq\Gamma_\boxplus(s_1)\ge\Gamma_\boxplus(s_2),
\end{equation}
so that $\Gamma_\boxplus$ defines a monotone (w.r.t. the partial order $\ge$ in $\R^n$) map.

The ISS conditions \eqref{eq:ISS_n_sys_sum} with this notation imply that for $t\geq 0$ it holds that
  \begin{equation}
    \label{eq:ISS subsysteme}
       \einsnorm{\bar{\phi}}{t,x,u} \leq \vec{\beta}(\einsnorm{\bar{\phi}}{0,x,u} ,t) + \Gamma^{ISS}_\boxplus(
    \supnorm{\phi}{[0,t]})
    + \vec{\gamma}(\|u\|_\Uc).
  \end{equation}
Analogously, the UGS conditions \eqref{eq:UGS_n_sys_sum} imply that for $t\geq 0$ it holds that    
  \begin{equation}
    \label{eq:UGS subsysteme}
    \einsnorm{\bar{\phi}}{t,x,u}   \leq \vec{\sigma}(\einsnorm{\bar{\phi}}{0,x,u} ) + \Gamma^{UGS}_\boxplus(\supnorm{\phi}{[0,t]})
    + \vec{\gamma}(\|u\|_\Uc).
  \end{equation}
    
To guarantee the stability of the interconnection $\Sigma$, the following properties of the gain operators $\Gamma^{UGS}_\boxplus$ and $\Gamma^{ISS}_\boxplus$ will be crucial.

%For $\rho \in \Kinf$ define $D_\rho:\R_+^n\to\R_+^n$ by $D((s_i)_{i=1}^n) := (\id+\rho(s_i))_{i=1}^n$.
 
%\begin{equation}
  %\label{eq:definition von D}
  %D (s_1,\ldots,s_n)^T :=
  %\left(
    %\begin{array}{c}
      %(\id+\alpha_1)(s_1)\\
      %\vdots\\
      %(\id+\alpha_n)(s_n)
    %\end{array}
  %\right).
%\end{equation}

\begin{definition}
\label{def:SGC} 
We say that a nonlinear operator $A:\R^n_+ \to \R^n_+$ satisfies
\begin{itemize}
    \item the \emph{small-gain condition}, if 
  \begin{equation}
    A(s)\not\geq s,\qquad\forall s\in\R^n_+\setminus\{0\}.
  \label{eq:SGC}
  \end{equation}
    \item the \emph{strong small-gain condition}, if there is $\rho \in \Kinf$ and a corresponding map $D:\R_+^n\to\R_+^n$, defined by $D((s_i)_{i=1}^n) := ((\id+\rho)(s_i))_{i=1}^n$, so that
  \begin{equation}
    (A\circ D)(s)\not\geq s,\qquad\forall s\in\R^n_+\setminus\{0\}.    
  \label{eq:Strong_SGC}
  \end{equation}
\end{itemize}
\end{definition}

We will need the following technical result, see \cite[Theorem 6.1]{Rue10}.
%(proved for $A=\Gamma_{\boxplus}$ in \cite[Lemma 13, p. 102]{DRW07}).
%We state it in a somewhat more general form than it was done in \cite{DRW07}, although the proof remains basically the same.
\begin{lemma}
\label{lem:boundedness_ID-Gamma_general case}
If $A=\Gamma_{\boxplus}$ defined by \eqref{eq:ps_gamma}, or $A=\Gamma_{\otimes}$ defined by \eqref{eq:Gamma-max}
 satisfies the strong small gain condition \eqref{eq:Strong_SGC}, then there exists a $\xi\in \Kinf$ such that for all $w,v \in \R^n_+$ it holds that
  \begin{equation}
    \label{eq:ineqbound}
    (\id-A)(w)\leq v \qrq |w|\leq \xi(|v|).
  \end{equation}
\end{lemma}

%\mir{Check this result! Here one should mention the reference \cite[Theorem 6.1]{Rue10}}
%
%\begin{lemma}
%\label{lem:boundedness_ID-Gamma_general case}
%Let $A:\R^n_+\to\R^n_+$ be a monotone w.r.t. the partial order $\geq$ in $\R^n$ and continuous operator, satisfying the strong small gain condition \eqref{eq:Strong_SGC}. 
%Then there exists a $\xi\in \Kinf$ such that for all $w,v \in \R^n_+$ it holds that
  %\begin{equation}
    %\label{eq:ineqbound}
    %(\id-A)(w)\leq v \qrq |w|\leq \xi(|v|).
  %\end{equation}
%\end{lemma}
%
%\begin{proof}
%\tocheck{First let us show that $A(0)=0$. If this were not the case, we would have for $x=A(0)\neq 0$ by monotonicity of $A$ that $A(x) \geq A(0)=x$, which contradicts to the strong small-gain condition \eqref{eq:Strong_SGC}.}
%
%In the proof of \cite[Lemma 13, p. 102]{DRW07} it was shown that for all $w,v \in \R^n_+$ satisfying $(\id-A)(w)\leq v$,
%it holds that
%\begin{eqnarray}
%w \leq R(|v|_{\max},\ldots,|v|_{\max})^T,
%\label{eq:MainLemmaEstimate}
%\end{eqnarray}
%where $R:=\Big(D\circ(D-\id)^{-1}\circ(\id + A)\Big)^n$.
%As $A(0)=0$, we have $R(0)=0$ as well.
%Furthermore, $R$ is continuous and monotone as a composition of continuous and monotone maps. Moreover, 
%as $D\circ(D-\id)^{-1} \in\K_\infty^n$, it holds that if $a_i>b_i$ for all $i=1,\ldots,n$, then also $R(a)_i>R(b)_i$ for all $i=1,\ldots,n$.
%
%Define $\xi(r):=|R(r,\ldots,r)|$. Then $\xi(0)=0$, $\xi$ is continuous, strictly increasing and unbounded (since $D\circ(D-\id)^{-1}\in\K_\infty^n$).
%With this $\xi$ we obtain
%$|w| \leq \xi(|v|).$
%\end{proof}

\section{Small-gain theorems for control systems}

In this section, we show small-gain theorems for UGS, ISS, AG and weak ISS properties.

\subsection{UGS small-gain theorem}
\label{sec:UGS_SGT}

We start with a small-gain theorem that guarantees that the coupling of UGS systems is a UGS system provided the strong small-gain condition \eqref{eq:Strong_SGC} holds.
This will in particular show that the coupled system is forward complete.    This result and its proof are an infinite-dimensional 
version of \cite[Theorem 8]{DRW07}.
\begin{theorem}[UGS Small-gain theorem]
\label{thm:UGS_SGT} 
Let $\Sigma_i:=(X_i,PC_b(\R_+,X_{\neq i}) \tm \Uc,\bar{\phi}_i)$, $i=1,\ldots,n$ be control systems, where all $X_i$, $i=1,\ldots,n$ and $\Uc$ are normed  linear spaces.
Assume that $\Sigma_i$, $i=1,\ldots,n$ are forward complete systems, satisfying the UGS estimates as in Lemma~\ref{lem:UGS_reformulation_n_systems}, and that the interconnection $\Sigma=(X,\Uc,\phi)$ is well-defined and possesses the BIC property.

If $\Gamma^{UGS}_\boxplus$ satisfies the strong small gain condition \eqref{eq:Strong_SGC}, then $\Sigma$ is forward complete and UGS.
\end{theorem}

\begin{proof}
Pick any $u\in\Uc$ and any initial condition $x\in X$. As we assume that the interconnection $\Sigma=(X,\Uc,\phi)$ is well-defined, by definition of a  control system, the solution of $\Sigma$ exists on a certain maximal existence interval $[0,t_m)$, where $t \in (0,+\infty]$.
Define $\phi_i$ and $\phi_{\neq i}$ as in \eqref{eq:phi_neq_i}.
%Define $\phi_i:=\phi_i(\cdot,x,u)$, $i=1,\ldots,n$ and 
%$\phi_{\neq i} = (\phi_1,\ldots,\phi_{i-1}, \phi_{i+1},\ldots,\phi_n)$, $i=1,\ldots,n$.

According to the definition of the interconnection, for all $i=1,\ldots,n$ it holds that $\phi_i(s,x,u) = \bar{\phi}_i(s,x_i,(\phi_{\neq i}, u))$, $s \in [0,t]$ and hence we have
\begin{equation} 
\label{eq:Useful_Relationship_2}
\begin{split}
 \big(\sup_{s\in[0,t]}\|\bar{\phi}_i(s,x_i,(\phi_{\neq i}, u))\|_{X_i}\big)_{i=1}^n
        &=
				 \big(\sup_{s\in[0,t]}\|\phi_i(s,x, u)\|_{X_i}\big)_{i=1}^n\\
        &=
				 \big(\|\phi_{i,[0,t]} \|_\infty\big)_{i=1}^n
        =:
\supnorm{\phi}{[0,t]}.                
\end{split}
\end{equation}

%\begin{equation} 
%\label{eq:Useful_Relationship_2}
%\begin{split}
 %\left(
      %\begin{array}{c}
        %\sup_{s\in[0,t]}\|\bar{\phi}_1(s,x_1,(\phi_{\neq 1}, u))\|_{X_1}\\
        %\vdots\\
        %\sup_{s\in[0,t]}\|\bar{\phi}_n(s,x_n,(\phi_{\neq n}, u))\|_{X_n}
      %\end{array}
    %\right)
        %&=
%\left(
      %\begin{array}{c}
        %\sup_{s\in[0,t]}\|\phi_1(s,x, u)\|_{X_1}\\
        %\vdots\\
        %\sup_{s\in[0,t]}\|\phi_n(s,x, u)\|_{X_n}
      %\end{array}
    %\right)\\
        %&=
%\left(
      %\begin{array}{c}
       %\|\phi_{1,[0,t]} \|_\infty\\
        %\vdots\\
        %\|\phi_{n,[0,t]} \|_\infty
      %\end{array}
    %\right)
        %=:
%\supnorm{\phi}{[0,t]}.                
%\end{split}
%\end{equation}

By assumptions, on $[0,t]$ the estimate \eqref{eq:UGS subsysteme} is valid. 
Taking in this estimate the supremum over $[0,t]$, and making use of \eqref{eq:Useful_Relationship_2}, we see that 
  \begin{equation}
    \label{eq:UGS subsysteme_2}
            \supnorm{\phi}{[0,t]} \leq \vec{\sigma}(\einsnorm{\bar{\phi}}{0,x,u} ) + \Gamma^{UGS}_\boxplus(\supnorm{\phi}{[0,t]})
    + \vec{\gamma}(\|u\|_\Uc),
  \end{equation}
    and thus 
  \begin{equation}
    \label{eq:UGS subsysteme_3}
    (I - \Gamma^{UGS}_\boxplus)\big(\supnorm{\phi}{[0,t]}\big)\leq \vec{\sigma}(\einsnorm{\bar{\phi}}{0,x,u}) + \vec{\gamma}(\|u\|_\Uc).
  \end{equation}
As $\Gamma^{UGS}_\boxplus$ is a monotone operator satisfying the strong small-gain condition, by Lemma~\ref{lem:boundedness_ID-Gamma_general case} there is a $\xi \in\Kinf$ so that 
\begin{eqnarray}
  \label{eq:UGS subsysteme_4}
  | \supnorm{\phi}{[0,\tau]}| &\leq& \xi\big(\big| \vec{\sigma}(\einsnorm{\bar{\phi}}{0,x,u}) + \vec{\gamma}(\|u\|_\Uc)\big|\big) \nonumber\\
                                                     &\leq& \xi\big( 2\big|\vec{\sigma}(\einsnorm{\bar{\phi}}{0,x,u})\big|\big) 
                                                    + \xi\big(2\big|\vec{\gamma}(\|u\|_\Uc)\big|\big).
\end{eqnarray}
We have:
\begin{eqnarray*}
%\hspace{10mm}
\big|\vec{\sigma}(\einsnorm{\bar{\phi}}{0,x,u})\big|
=
\Big(\textstyle\sum_{i=1}^n \big(\sigma_i(\|x_i\|_{X_i})\big)^2\Big)^{\sfrac{1}{2}}
\leq 
\Big(\textstyle\sum_{i=1}^n \big(\sigma_i(\|x\|_{X})\big)^2\Big)^{\sfrac{1}{2}}
=:\sigma(\|x\|_X).
%\label{eq:Sigma-vec-estimate}
\end{eqnarray*}
Clearly, $\sigma\in\Kinf$. Furthermore, it holds that
\begin{eqnarray}
\label{eq:Useful_Relationship}
\phantom{aaaa}
\|\phi (t, x, u)\|_{X} = \Big(\textstyle\sum_{i=1}^n \|\phi_i(t, x, u)\|_{X_i}^2\Big)^{\sfrac{1}{2}}
\leq  \Big(\textstyle\sum_{i=1}^n \|\phi_{i,[0,t]} \|_\infty^2\Big)^{\sfrac{1}{2}}
=  | \supnorm{\phi}{[0,\tau]}|.
\end{eqnarray}
Substituting these expressions into \eqref{eq:UGS subsysteme_4}, we see that a UGS estimate
\begin{eqnarray}
  \label{eq:UGS subsysteme_5}
\|\phi (t, x, u)\|_{X} &\leq& \xi\big( 2\sigma(\|x\|_X)\big) + \xi\big(2\big|\vec{\gamma}(\|u\|_\Uc)\big|\big)
\end{eqnarray}
is valid on the certain maximal interval of existence $[0,t_m)$ of $\phi(\cdot,x,u)$.
As we assume that $\Sigma$ possesses the BIC property, then $\Sigma$ is forward complete, see Lemma~\ref{lem:ISS_UGS_and_BIC_property}.
\end{proof}

\subsection{ISS small-gain theorem}
\label{sec:ISS_SGT}

We start with a technical lemma:
\begin{lemma}
\label{lem:LimSupEstimate}
Let $g:\R_+\to\R^p_+$, $p\in\N$ be a globally bounded function and let $f:\R_+\to\R_+$ be an unbounded monotonically increasing function.
Then
\begin{eqnarray}
\lim_{t\to\infty} \sup_{s\geq f(t)} g(s)  =  \lim_{t\to\infty} \sup_{s\geq t} g(s). 
\label{eq:LimSupEstimate}
\end{eqnarray}
\end{lemma}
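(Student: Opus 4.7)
The plan is to interpret the suprema componentwise and then reduce to a one-dimensional statement about limits of monotone functions. Writing $g=(g_1,\ldots,g_p)^T$, set
\[
h_k(t) := \sup_{s \geq t} g_k(s), \qquad k=1,\ldots,p,
\]
and $h(t):=(h_1(t),\ldots,h_p(t))^T$. Since $g$ is globally bounded, each $h_k$ takes values in $\R_+$, is non-increasing in $t$, and is bounded below by $0$. Hence the limit $L_k := \lim_{t\to\infty} h_k(t)$ exists in $\R_+$, and so $L:=(L_1,\ldots,L_p)^T = \lim_{t\to\infty} \sup_{s\geq t} g(s)$ exists componentwise; this identifies the right-hand side of \eqref{eq:LimSupEstimate}.

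Next I turn to the left-hand side. Since $f:\R_+\to\R_+$ is monotone and unbounded and takes values in $\R_+$, it must be non-decreasing (a non-increasing function into $\R_+$ is automatically bounded) and satisfy $f(t)\to\infty$ as $t\to\infty$. Consequently, for every fixed $k$ the real-valued function $t\mapsto h_k(f(t)) = \sup_{s\geq f(t)} g_k(s)$ is the composition of the non-increasing function $h_k$ with a function tending to infinity.

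The final step is a routine $\varepsilon$-argument: given $\varepsilon>0$ and $k$, pick $T_k$ such that $|h_k(\tau)-L_k|<\varepsilon$ for all $\tau\geq T_k$; by unboundedness of $f$ there exists $T'_k$ with $f(t)\geq T_k$ for $t\geq T'_k$, so $|h_k(f(t))-L_k|<\varepsilon$ for $t\geq T'_k$. Hence $\lim_{t\to\infty} h_k(f(t)) = L_k$ for every $k$, which componentwise gives
\[
\lim_{t\to\infty} \sup_{s\geq f(t)} g(s) = L = \lim_{t\to\infty} \sup_{s\geq t} g(s),
\]
as claimed. There is no serious obstacle here: the only point requiring a moment of care is verifying that ``monotone and unbounded'' for a function into $\R_+$ forces $f(t)\to\infty$, and handling the vector values componentwise.
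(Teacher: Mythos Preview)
Your proof is correct and takes essentially the same approach as the paper: both use the monotonicity and unboundedness of $f$ together with an elementary $\varepsilon$-argument. The paper phrases it as verifying the two inequalities $a\leq b$ and $b\leq a$ directly, whereas you recognize it as a composition-of-limits fact after first establishing that $h_k$ is non-increasing (and you are more explicit about the componentwise treatment), but the substance is identical.
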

\begin{proof}
Define $a:=\lim_{t\to\infty} \sup_{s\geq f(t)} g(s)$ and $b:=\lim_{t\to\infty} \sup_{s\geq t} g(s)$. 
As $g$ is globally bounded, both $a$ and $b$ are well-defined and finite.
By definition it follows that for all $\varepsilon>0$ there is a time $T>0$ so that 
$\sup_{s\geq f(t)} g(s)  < a + \varepsilon$ for all $t\geq T$.
%\[
%t\geq T \qrq   \sup_{s\geq f(t)} g(s)  < a + \varepsilon.
%\] 
As $f$ is monotone, this is equivalent to the fact that 
\[
t\geq f(T) \qrq   \textstyle\sup_{s\geq t} g(s)  < a + \varepsilon.
\] 
As $\varepsilon$ can be chosen arbitrarily small, this shows that $b \leq a$.

Conversely, we have that for all $\varepsilon>0$ there is a time $T>0$ so that 
\begin{eqnarray}
t\geq T \qrq   \sup_{s\geq t} g(s)  < b + \varepsilon.
\label{eq:LimSup_Interm_Implication}
\end{eqnarray}
As $f$ is unbounded, there is a time $\tau=\tau(T)$ so that $f(\tau)>T$. 
Thus, \eqref{eq:LimSup_Interm_Implication} shows that $ \sup_{s\geq f(\tau)} g(s)  < b + \varepsilon$ and
as $f$ is monotone it follows that
$\sup_{s\geq f(t)} g(s)  < b + \varepsilon$
for all $t\geq \tau$. 
%\[
%t\geq \tau    \qrq   \sup_{s\geq f(t)} g(s)  < b + \varepsilon.
%\] 
This implies that $a\leq b$. Overall, $a=b$.
\end{proof}

Now we are able to prove the main result of this paper.
\begin{theorem}[ISS Small-gain theorem]
\label{thm:ISS_SGT} 
Let $\Sigma_i:=(X_i,PC_b(\R_+,X_{\neq i}) \tm \Uc,\bar{\phi}_i)$, $i=1,\ldots,n$ be control systems, where all $X_i$, $i=1,\ldots,n$ and $\Uc$ are normed  linear spaces.
Assume that $\Sigma_i$, $i=1,\ldots,n$ are forward complete systems, satisfying the ISS estimates as in Lemma~\ref{lem:ISS_reformulation_n_systems}, and that the interconnection $\Sigma=(X,\Uc,\phi)$ is well-defined and possesses the BIC property.

If $\Gamma^{ISS}_\boxplus$ satisfies the strong small gain condition \eqref{eq:Strong_SGC}, then $\Sigma$ is ISS.    
\end{theorem}

\begin{proof}
We show UGS and bUAG properties of the interconnection $\Sigma=(X,\Uc,\phi)$, and infer ISS of $\Sigma$ by Lemma~\ref{lem:UGS_und_bUAG_imply_UAG}.

\noindent\textbf{UGS.} From the assumptions of the theorem, it follows that all $\Sigma_i$ are UGS with the gain matrix $\Gamma^{ISS}$. As $\Gamma^{ISS}_\boxplus$ satisfies the strong small gain condition, Theorem~\ref{thm:UGS_SGT} shows that the coupled system $\Sigma$ is forward complete and UGS.

\noindent\textbf{bUAG.} We use the notation \eqref{eq:phi_neq_i} for $\phi_i$ and $\phi_{\neq i}$.
As $\Sigma$ is a well-defined and forward complete interconnection, we have that $\phi_i(t,x,u) = \bar{\phi}_i(t,x_i,(\phi_{\neq i}, u))$ for all $i=1,\ldots,n$ and for all $t\geq 0$.

Pick any $r>0$, any $u\in\Uc$: $\|u\|_\Uc\leq r$ and any $x \in B_r$.
As $\Sigma$ is UGS, the estimate \eqref{GSAbschaetzung} is valid for some $\sigma_{UGS},\gamma_{UGS}\in\Kinf$ and it holds that
\begin{eqnarray}
\|\phi(t,x,u)\|_X \leq \mu(r):=\sigma_{UGS}(r) + \gamma_{UGS}(r),\quad t\geq 0.
\label{eq:UGS_our_case}
\end{eqnarray}
On the other hand, for all $i=1,\ldots,n$ due to the cocycle property, for any $t,\tau\geq 0$ we have
\begin{eqnarray}
\phi_i(t+\tau,x, u) &=&\bar{\phi}_i(t+\tau,x_i,(\phi_{\neq i}, u))  \nonumber\\
&=& \bar{\phi}_i\Big(\tau,\bar{\phi}_i\big(t,x_i,(\phi_{\neq i}, u)\big),\big(\phi_{\neq i}(\cdot + t), u(\cdot + t)\big)\Big).
\label{eq:Cocycle_property}
\end{eqnarray}
Note that due to the axiom of shift invariance, it holds that $u(t+\cdot) \in\Uc$.

In view of \eqref{eq:UGS_our_case} we have for all $i=1,\ldots,n$ that
\begin{eqnarray}
\|\bar{\phi}_i(t,x_i,\phi_{\neq i}, u)\|_{X_i} = \|\phi_i(t,x, u)\|_{X_i} \leq \|\phi(t,x, u)\|_{X} \leq \mu(r)
\label{eq:Bound_on_state}
\end{eqnarray}
and
\begin{eqnarray}
\|\phi_{\neq i}\|_\infty \leq \mu(r).
\label{eq:Bound_on_internal_inputs}
\end{eqnarray}
Furthermore, as $\sigma_{UGS}(r)\geq r$ for all $r\in\R_+$ (this follows from \eqref{GSAbschaetzung} by setting $u:=0$ and $t:=0$),
we have also
\begin{eqnarray}
\|u\|_\Uc \leq \mu(r).
\label{eq:Bound_on_external_inputs}
\end{eqnarray}
By the bUAG property of $\Sigma_i$ (which is implied by ISS of $\Sigma_i$), there is a time $\tau_i=\tau_i(\varepsilon,\mu(r))$ so that 
\begin{align}
\|x\|_X\leq r\ \wedge\ \|u\|_{\Uc}\leq r \ \wedge\ \tau\geq \tau_i &  \nonumber\\
 \qrq   \|\phi_i(t+\tau,x , u)\|_{X_i} 
&\leq \varepsilon + \textstyle\sum_{j\neq i}\gamma_{ij}\left(\left\|\phi_{j,[t,+\infty)}\right\|_{\infty}\right) + \gamma_i(\|u_{[t,+\infty)}\|_{\Uc})
\nonumber\\
&\leq \varepsilon + \textstyle\sum_{j\neq i}\gamma_{ij}\left(\left\|\phi_{j,[t,+\infty)}\right\|_{\infty}\right) + \gamma_i(\|u\|_{\Uc}).
\label{eq:ISS_SGT_tmp_estimate_1}
\end{align}
Note that dependence of $\tau_i$ on $r$ and $\varepsilon$ only (and not on $x$, $u$ and $t$) follows from the estimates \eqref{eq:Bound_on_state}, \eqref{eq:Bound_on_internal_inputs} and \eqref{eq:Bound_on_external_inputs}.

Define the uniform convergence time for all subsystems, which is finite, as we have finitely many subsystems.
\begin{eqnarray}
\tau^*(\varepsilon,r):=\max_{i=1,\ldots,n} \tau_i(\varepsilon,\mu(r)).
\label{eq:Uniform_time_UAG_subsystems}
\end{eqnarray}
Pick any $k \in\N$. 
Taking supremum of \eqref{eq:ISS_SGT_tmp_estimate_1} over $x\in B_r$ and over all $u\in\Uc$: $\|u\|_{\Uc} \in [2^{-k}r,2^{1-k}r]$, we obtain for all $i=1,\ldots,n$ and all $\tau\geq \tau^* $ that 
\begin{align}
\sup_{\|u\|_{\Uc} \in [2^{-k}r,2^{1-k}r]}&\sup_{x\in B_r}\|\phi_i(t+\tau,x , u)\|_{X_i} \nonumber\\
&\leq \varepsilon + \textstyle\sum_{j\neq i}\gamma_{ij}\Big(\sup_{\|u\|_{\Uc} \in [2^{-k}r,2^{1-k}r]}\sup_{x\in B_r}\left\|\phi_{j,[t,+\infty)}\right\|_{\infty}\Big) + \gamma_i(2^{1-k}r) 
\label{eq:ISS_SGT_tmp_estimate_2}
\end{align}
and thus
\begin{align}
\sup_{s\geq t+\tau^*}& \sup_{\|u\|_{\Uc} \in [2^{-k}r,2^{1-k}r]} \sup_{x\in B_r}\|\phi_i(s, x , u)\|_{X_i} \nonumber\\
&\leq \varepsilon + \textstyle\sum_{j\neq i}\gamma_{ij}\Big(\sup_{\|u\|_{\Uc} \in [2^{-k}r,2^{1-k}r]}\sup_{x\in B_r} \sup_{s\geq t} \|\phi_j(s, x , u)\|_{X_j}\Big) + \gamma_i(2^{1-k}r)\nonumber \\
&= \varepsilon + \textstyle\sum_{j\neq i}\gamma_{ij}\Big(\sup_{s\geq t} \sup_{\|u\|_{\Uc} \in [2^{-k}r,2^{1-k}r]} \sup_{x\in B_r} \|\phi_j(s, x , u)\|_{X_j}\Big) + \gamma_i(2^{1-k}r).
\label{eq:ISS_SGT_tmp_estimate_3}
\end{align}
Define 
\begin{align*}
y_i(r,k):=&  \lim_{t\to +\infty}\sup_{s\geq t}\sup_{\|u\|_{\Uc} \in [2^{-k}r,2^{1-k}r]}\sup_{x\in B_r}\|\phi_i(s, x , u)\|_{X_i}\\
=& \limsup_{t\to +\infty} \sup_{\|u\|_{\Uc} \in [2^{-k}r,2^{1-k}r]}\sup_{x\in B_r}\|\phi_i(t, x, u)\|_{X_i}.
\end{align*}
By Lemma~\ref{lem:LimSupEstimate} it holds that 
\[
y_i(r,k) =  \lim_{t\to +\infty}\sup_{s\geq t +\tau^*}\sup_{\|u\|_{\Uc} \in [2^{-k}r,2^{1-k}r]}\sup_{x\in B_r}\|\phi_i(s, x , u)\|_{X_i},
\]
and thus taking a limit $t\to\infty$ in \eqref{eq:ISS_SGT_tmp_estimate_3},     we have that
\begin{eqnarray}
y_i(r,k) &\leq& \varepsilon + \textstyle\sum_{j\neq i}\gamma_{ij}\left(y_j(r,k)\right) + \gamma_i(2^{1-k}r).
\label{eq:ISS_SGT_tmp_estimate_4}
\end{eqnarray}
As \eqref{eq:ISS_SGT_tmp_estimate_4} is valid for any $\varepsilon>0$, we obtain by computing the limit $\varepsilon\to +0$ that
\begin{eqnarray}
y_i(r,k) &\leq& \textstyle\sum_{j\neq i}\gamma_{ij}\left(y_j(r,k)\right) + \gamma_i(2^{1-k}r),\quad i=1,\ldots,n.
\label{eq:ISS_SGT_tmp_estimate_5}
\end{eqnarray}
With the notation \eqref{eq:vector_gamma} and by defining 
$y(r,k):=(y_1(r,k),\ldots, y_n(r,k))^T \in\R^n_+$ we can rewrite \eqref{eq:ISS_SGT_tmp_estimate_5} in a vector form:
\begin{eqnarray}
y(r,k) &\leq& \Gamma^{ISS}_\boxplus\left(y(r,k)\right) + \vec{\gamma}(2^{1-k}r).
\label{eq:ISS_SGT_tmp_estimate_6}
\end{eqnarray}
Since $\Gamma^{ISS}_\boxplus$ satisfies the strong small-gain condition, applying Lemma~\ref{lem:boundedness_ID-Gamma_general case} to the inequality $(\id-\Gamma^{ISS}_\boxplus)(y(r,k)) \leq \vec{\gamma}(2^{1-k}r)$, we obtain that there is $\xi\in\Kinf$ so that 
\begin{eqnarray*}
|y(r,k)| &\leq& \xi(|\vec{\gamma}(2^{1-k}r)|).
%\label{eq:ISS_SGT_tmp_estimate_7}
\end{eqnarray*}
Using a technical computation
\begin{align*}
&\limsup_{t\to +\infty} \sup_{\|u\|_{\Uc} \in [2^{-k}r,2^{1-k}r]}\sup_{x\in B_r}\|\phi(t, x, u)\|_{X}  \\
&=
\limsup_{t\to +\infty}\sup_{\|u\|_{\Uc} \in [2^{-k}r,2^{1-k}r]}\sup_{x\in B_r}\Big(\textstyle\sum_{i=1}^n \|\phi_i(t, x , u)\|_{X_i}^2\Big)^{\sfrac{1}{2}}\\
&\leq
\Big(\sum_{i=1}^n \limsup_{t\to +\infty}\sup_{\|u\|_{\Uc} \in [2^{-k}r,2^{1-k}r]}\sup_{x\in B_r} \|\phi_i(t, x , u)\|_{X_i}^2\Big)^{\sfrac{1}{2}}
\hspace{-2mm}=
\Big(\sum_{i=1}^n y^2_i(r,k)\Big)^{\sfrac{1}{2}}
\hspace{-2mm}=
|y(r,k)|,
\end{align*}
%and thus
%\begin{eqnarray*}
%\limsup_{t\to +\infty}\sup_{\|u\|_{\Uc} \in [2^{-k}r,2^{1-k}r]}\sup_{x\in B_r}\|\phi(s, x, u)\|_{X}  
%\leq 
%\sqrt{\sum_{i=1}^n \limsup_{t\to +\infty}\sup_{\|u\|_{\Uc} \in [2^{-k}r,2^{1-k}r]}\sup_{x\in B_r} \|\phi_i(s, x , u)\|_{X_i}^2}
%=
%\sqrt{\sum_{i=1}^n y^2_i(r,k)}
%=
%|y(r,k)|
%\end{eqnarray*}
we conclude, that for any $r>0$ and any $k\in\N$ it holds that
\begin{eqnarray*}
\limsup_{t\to +\infty}\sup_{\|u\|_{\Uc} \in [2^{-k}r,2^{1-k}r]}\sup_{x\in B_r}\|\phi(t, x, u)\|_{X}  \leq \xi(|\vec{\gamma}(2^{1-k}r)|).
\end{eqnarray*}
Hence, for any $\varepsilon>0$, any $r>0$ and any $k\in\N$ there is a time $\tilde{\tau}=\tilde{\tau}(\varepsilon,r,k)$ so that

\begin{equation} 
\label{eq:ISS_SGT_interediate_step}
\begin{split}
\|x\|_X\leq r\ \wedge \ \|u\|_\Uc \in [2^{-k}r,2^{1-k}r] &\ \wedge \ t\geq\tilde{\tau}(\varepsilon,r,k)\\
& \qrq \|\phi(t, x, u)\|_{X}  \leq \varepsilon + \xi(|\vec{\gamma}(2^{1-k}r)|).
\end{split}
\end{equation}
%
%\begin{eqnarray}
%\|x\|_X\leq r\ \wedge \ \|u\|_\Uc \in [2^{-k}r,2^{1-k}r] \ \wedge \ t\geq\tilde{\tau}(\varepsilon,r,k) \qrq \|\phi(s, x, u)\|_{X}  \leq \varepsilon + \xi(|\vec{\gamma}(2^{1-k}r)|).
%\label{eq:ISS_SGT_interediate_step}
%\end{eqnarray}
Define $k_0=k_0(\varepsilon,r) \in\N$ as the minimal $k$ so that $\xi(|\vec{\gamma}(2^{1-k}r)|) \leq \varepsilon$ (clearly, such $k_0$ always exists and is finite) and let
\[
\hat{\tau}(\varepsilon,r):=\max\{\tilde{\tau}(\varepsilon,r,k):\ k= 1,\ldots,k_0(\varepsilon,r)\}.
\]
As $k_0$ is finite, $\hat{\tau}(\varepsilon,r)$ is well-defined and finite as well.

Pick any $u\in\Uc$ such that $u\neq 0$ and  $\|u\|_{\Uc}\leq r$. 
Then there is $k\in\N$ so that
$\|u\|_\Uc \in (2^{-k}r,2^{1-k}r]$.
If $k\leq k_0$ (i.e. if inputs are large enough), then for $t\geq\hat{\tau}(\varepsilon,r) $ it holds that 
\begin{eqnarray}
\|\phi(t, x, u)\|_{X}  \leq \varepsilon + \xi(|\vec{\gamma}(2^{1-k}r)|) \leq \varepsilon + \xi(|\vec{\gamma}(2 \|u\|_\Uc)|).
\label{eq:Final_bUAG_implication_1}
\end{eqnarray}
It remains to consider the case when $k>k_0$, i.e. when inputs are small.
The estimate \eqref{eq:ISS_SGT_interediate_step} gives convergence time, which depends on $k$ and it is not clear whether the supremum of
$\tilde{\tau}(\varepsilon,r,k)$ over all $k\geq k_0$ exists.
In order to overcome this obstacle and to find the uniform time, we mimic above argument once again, namely: for any $q \in [0,r]$ one can take  
supremum of \eqref{eq:ISS_SGT_tmp_estimate_1} over $x\in B_r$ and over all $u\in\Uc$: $\|u\|_{\Uc} \leq q$, to obtain for all $i=1,\ldots,n$ and all $\tau\geq \tau^* $ that 
\begin{equation} 
\label{eq:ISS_SGT_tmp_estimate_Small_U}
\sup_{\|u\|_{\Uc}\leq q}\sup_{x\in B_r}\|\phi_i(t+\tau,x , u)\|_{X_i} 
\leq \varepsilon + {\textstyle\sum_{j\neq i}}\gamma_{ij}\big(\hspace{-1mm}\sup_{\|u\|_{\Uc} \leq q}\sup_{x\in B_r}\left\|\phi_{j,[t,+\infty)}\right\|_{\infty}\hspace{-1mm}\big) {+} \gamma_i(q),
\end{equation}
%\begin{eqnarray}
%%\tau\geq \tau^*   \qrq 
%\sup_{\|u\|_{\Uc}\leq q}\sup_{x\in B_r}\|\phi_i(t+\tau,x , u)\|_{X_i} 
%&\leq& \varepsilon + \sum_{j\neq i}\gamma_{ij}\left(\sup_{\|u\|_{\Uc} \leq q}\sup_{x\in B_r}\left\|\phi_{j,[t,+\infty)}\right\|_{\infty}\right) + \gamma_i(q),
%\label{eq:ISS_SGT_tmp_estimate_Small_U}
%\end{eqnarray}
where $\tau^*$ has been defined in \eqref{eq:Uniform_time_UAG_subsystems}. Defining 
\[
z_i(r,q):=  \lim_{t\to +\infty}\sup_{s\geq t}\sup_{\|u\|_{\Uc} \leq q}\sup_{x\in B_r}\|\phi_i(s, x , u)\|_{X_i}
\]
and doing analogous steps as above we obtain for any $r>0$ and any $q\leq r$ that
\begin{eqnarray*}
\limsup_{t\to +\infty}\sup_{\|u\|_{\Uc} \leq q}\sup_{x\in B_r}\|\phi(t, x, u)\|_{X}  \leq \xi(|\vec{\gamma}(q)|).
\end{eqnarray*}
This means that for any $\varepsilon>0$, any $r>0$ and any $q\geq 0$ there is a time $\bar{\tau}=\bar{\tau}(\varepsilon,r,q)$ so that
\begin{eqnarray}
\phantom{aaaaa}\|x\|_X\leq r\ \wedge \ \|u\|_\Uc \leq q \ \wedge \ t\geq\bar{\tau}(\varepsilon,r,q) \srs \|\phi(t, x, u)\|_{X} \leq \varepsilon + \xi(|\vec{\gamma}(q)|).
\label{eq:ISS_SGT_interediate_step_Small_U}
\end{eqnarray}
In particular, for $q_0:=2^{1-k_0(\varepsilon,r)}r$ we have that 
\begin{equation} 
\label{eq:Final_bUAG_implication_2}
\begin{split}
\|x\|_X\leq r\ \wedge \ \|u\|_\Uc \leq &q_0 \ \wedge \ t\geq\bar{\tau}(\varepsilon,r,q_0) \\
& \qrq \|\phi(t, x, u)\|_{X}  \leq \varepsilon + \xi(|\vec{\gamma}(2^{1-k_0(\varepsilon,r)}r)|)\leq \varepsilon + \varepsilon.
\end{split}
\end{equation}
%
%\begin{eqnarray}
%\|x\|_X\leq r\ \wedge \ \|u\|_\Uc \leq q_0 \ \wedge \ t\geq\bar{\tau}(\varepsilon,r,q_0) \qrq \|\phi(s, x, u)\|_{X}  \leq \varepsilon + \xi(|\vec{\gamma}(2^{1-k_0(\varepsilon,r)}r)|)\leq \varepsilon + \varepsilon.
%\label{eq:Final_bUAG_implication_2}
%\end{eqnarray}
Combining \eqref{eq:Final_bUAG_implication_1} and \eqref{eq:Final_bUAG_implication_2}, we obtain for 
$\tau(\varepsilon,r) :=\max\{\hat{\tau}(\varepsilon,r), \bar{\tau}(\varepsilon,r,q_0) \}$ that
\[
\|x\|_X\leq r\ \wedge \|u\|_\Uc \leq r \ \wedge t\geq\tau(\varepsilon,r) \qrq  \|\phi(t, x, u)\|_{X} 
\leq \varepsilon + \max\{\varepsilon, \xi(|\vec{\gamma}(2 \|u\|_\Uc)|) \}.
\]
and finally
\[
\|x\|_X\leq r\ \wedge \|u\|_\Uc \leq r \ \wedge t\geq\tau(\varepsilon,r) \qrq \|\phi(t, x, u)\|_{X} 
\leq 2\varepsilon + \xi(|\vec{\gamma}(2 \|u\|_\Uc)|).
\]
As $r\mapsto \xi(|\vec{\gamma}(2 r)|)$ is a $\Kinf$-function, this implication shows that $\Sigma$ is bUAG.

\noindent
\textbf{ISS.} Since $\Sigma$ is UGS $\wedge$ bUAG, Lemma~\ref{lem:UGS_und_bUAG_imply_UAG} implies ISS of $\Sigma$.
\end{proof}

\begin{remark}[Discussion of the proof of Theorem~\ref{thm:ISS_SGT}]
\label{rem:ISS_SGT_Proof_Technique} 
%The proof of ISS small-gain theorem seems relatively technical, and in this remark, we would like to explain our proof technique.
A key step in the proof of Theorem~\ref{thm:ISS_SGT} is the shifting of the time horizon, see \eqref{eq:ISS_SGT_tmp_estimate_1}, achieved by means of the cocycle property \eqref{eq:ISS_SGT_tmp_estimate_3}. 
It is important that we want to achieve the dependence of the convergence time $\tau_i$ on $r$ and $\varepsilon$ only, which follows from
\eqref{eq:Bound_on_state}, \eqref{eq:Bound_on_internal_inputs} and \eqref{eq:Bound_on_external_inputs}, which are valid in turn since we consider the inputs with a norm uniformly bounded by $r$. Having an arbitrary $u$ would result that the norm of the state $\phi_i(t,x_i,\phi_{\neq i}, u)$ in \eqref{eq:Cocycle_property}, and hence the time $\tau_i$ would depend on the norm of $u$, which makes it hard to obtain UAG property of the interconnection.
Using bUAG property instead helps us to avoid these complications and this is one of the reasons for introducing this property.

Then, in order to tackle the distinction in the time intervals over which the supremum in the left and right-hand sides of \eqref{eq:ISS_SGT_tmp_estimate_3} is taken, we pass to the limit $t\to\infty$. However, before we can compute such limit we have to make the expressions in both sides of \eqref{eq:ISS_SGT_tmp_estimate_1} independent on $x,u$. Taking supremum over $x\in B_r$ causes no problems, but 
taking a supremum over $u \in B_{r,\Uc}$ leads to overly rough estimates, and thus we slice the ball $u \in B_{r,\Uc}$
into several \q{rings}, which helps in the end to obtain the desired bUAG property of the whole interconnection.
\end{remark}

The discussion of the obtained results and their relations to the existing works we postpone to the Section~\ref{sec:Discussion}, and proceed to the weak ISS small-gain theorems.

\subsection{AG and weak ISS small-gain theorem}
\label{sec:wISS_SGT}

Our next result is the small-gain theorem for the asymptotic gain property. 

\begin{theorem}[AG Small-gain theorem]
\label{thm:AG_SGT} 
Let $\Sigma_i:=(X_i,PC_b(\R_+,X_{\neq i}) \tm \Uc,\bar{\phi}_i)$, $i=1,\ldots,n$ be control systems, where all $X_i$, $i=1,\ldots,n$ and $\Uc$ are normed  linear spaces.
Assume that $\Sigma_i$, $i=1,\ldots,n$ are forward complete, satisfy the AG estimates as in Lemma~\ref{lem:AG_reformulation_n_systems}, and that the interconnection $\Sigma$ is well-defined and forward complete.

If $\Gamma^{AG}_\boxplus$ satisfies the strong small gain condition \eqref{eq:Strong_SGC}, then $\Sigma$ is AG.
\end{theorem}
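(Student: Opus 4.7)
The plan is to mimic the bUAG step from the proof of Theorem~\ref{thm:ISS_SGT}: apply the AG estimate of each subsystem to the shifted restriction of the interconnection trajectory via the cocycle identity, derive a vector inequality for the asymptotic norms, and then invoke Lemma~\ref{lem:boundedness_ID-Gamma_general case}. Because AG allows the convergence time to depend on $(x,u,w_{\neq i})$ themselves (not merely on their bounds), we avoid the slicing of the input ball and the uniformization of convergence times that were needed in the ISS case.

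Fix $x \in X$, $u \in \Uc$, and use the notation \eqref{eq:phi_neq_i}. By forward completeness of $\Sigma$, $\phi_i(t) := \phi_i(t,x,u)$ is defined for all $t \geq 0$, and by definition of an interconnection $\phi_i(t) = \bar{\phi}_i(t,x_i,(\phi_{\neq i},u))$. Set $b_i := \limsup_{t \to \infty}\|\phi_i(t)\|_{X_i} \in [0,+\infty]$. For each $T \geq 0$, the cocycle property gives
\[
\phi_i(t+T,x,u) = \bar{\phi}_i\bigl(t,\phi_i(T,x,u),(\phi_{\neq i}(\cdot + T),u(\cdot + T))\bigr).
\]
Applying the AG estimate from Lemma~\ref{lem:AG_reformulation_n_systems} to $\Sigma_i$ with this initial state and these inputs (using shift invariance $\|u(\cdot + T)\|_\Uc \leq \|u\|_\Uc$), then letting $t \to \infty$ and $\varepsilon \to 0$, yields $b_i \leq \sum_{j \neq i}\gamma_{ij}(\|\phi_{j,[T,+\infty)}\|_\infty) + \gamma_i(\|u\|_\Uc)$. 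Letting $T \to \infty$, so that $\|\phi_{j,[T,+\infty)}\|_\infty \downarrow b_j$, and using continuity of $\gamma_{ij}$, gives
\[
b_i \leq \sum_{j \neq i}\gamma_{ij}(b_j) + \gamma_i(\|u\|_\Uc), \qquad i = 1,\ldots,n.
\]

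The main obstacle is justifying that $b_i < +\infty$ for every $i$; only then do the componentwise estimates collect into the vector inequality $(\Id - \Gamma^{AG}_\oplus)(b) \leq \gamma(\|u\|_\Uc)$ with $b \in \R_+^n$, so that Lemma~\ref{lem:boundedness_ID-Gamma_general case} becomes applicable. I would settle this by a contradiction argument driven by the strong small-gain condition \eqref{eq:Strong_SGC}: if $J := \{i : b_i = +\infty\}$ were nonempty, then the above inequality restricted to $i \in J$, combined with monotonicity of $\Gamma^{AG}_\oplus$ and the validity of the AG estimate for arbitrarily small $\varepsilon$, would, by the same truncation/iteration scheme as in the proof of Lemma~\ref{lem:boundedness_ID-Gamma_general case}, produce a nonzero vector $s \in \R_+^n$ with $(\Gamma^{AG}_\oplus \circ D)(s) \geq s$, contradicting \eqref{eq:Strong_SGC}.

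Once $b \in \R_+^n$ is in hand, Lemma~\ref{lem:boundedness_ID-Gamma_general case} supplies $\xi \in \Kinf$ with $|b| \leq \xi(|\gamma(\|u\|_\Uc)|)$. Since $\|\phi(t,x,u)\|_X \leq \sqrt{\sum_i \|\phi_i(t)\|_{X_i}^2}$, an estimate analogous to \eqref{eq:Useful_Relationship} gives $\limsup_{t \to \infty}\|\phi(t,x,u)\|_X \leq |b| \leq \tilde{\gamma}(\|u\|_\Uc)$ with $\tilde{\gamma}(r) := \xi(|\gamma(r)|) \in \Kinf \cup \{0\}$. By the definition of $\limsup$, for every $\varepsilon > 0$ there exists $\tau = \tau(\varepsilon,x,u) < +\infty$ such that $\|\phi(t,x,u)\|_X \leq \varepsilon + \tilde{\gamma}(\|u\|_\Uc)$ for all $t \geq \tau$, which is the AG property of $\Sigma$ with asymptotic gain $\tilde{\gamma}$.
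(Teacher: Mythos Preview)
Your approach coincides with the paper's sketch: fix $(x,u)$, use the cocycle identity to apply the AG estimate of each $\Sigma_i$ to the shifted interconnection trajectory, pass to the limsup to obtain the vector inequality $(\Id-\Gamma^{AG}_\oplus)(b)\leq\gamma(\|u\|_\Uc)$ for $b_i=\limsup_{t\to\infty}\|\phi_i(t)\|_{X_i}$, and conclude via Lemma~\ref{lem:boundedness_ID-Gamma_general case}. Both you and the paper observe that the input-slicing and time-uniformization machinery from the ISS proof is unnecessary here, since the AG convergence time may depend freely on $(x,u)$.

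The weak point is the finiteness of the $b_i$, and your contradiction sketch does not settle it. For $i\in J=\{i:b_i=+\infty\}$, the inequality $b_i\leq\sum_{j}\gamma_{ij}(b_j)+\gamma_i(\|u\|_\Uc)$ is vacuous in the extended reals whenever some $\gamma_{ij}(b_j)=+\infty$, so there is no finite nonzero $s$ to feed into the iteration behind Lemma~\ref{lem:boundedness_ID-Gamma_general case}; the asserted conclusion $(\Gamma^{AG}_\oplus\circ D)(s)\geq s$ is not substantiated by what you have written. There is also a circularity one step earlier: applying the AG estimate of $\Sigma_i$ to the shifted internal input $\phi_{\neq i}(\cdot+T)$ presupposes that this input lies in $PC(\R_+,X_{\neq i})$, i.e.\ has finite sup norm, which is exactly global boundedness of the interconnection trajectory --- the thing you are trying to establish. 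The paper's sketch passes over precisely the same point (it simply defines $y_i(x,u)=\limsup_t\|\phi_i\|$ without justifying finiteness) and remarks that the argument can alternatively be carried out along the lines of \cite[Theorem~9]{DRW07}. So you are at the paper's level of rigor on this issue, but the contradiction paragraph as written should be replaced either by a pointer to that reference or by a genuine boundedness argument rather than an appeal to the machinery of Lemma~\ref{lem:boundedness_ID-Gamma_general case}, which is only formulated for finite vectors.
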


In this case, the complexities, described in Remark~\ref{rem:ISS_SGT_Proof_Technique} do not appear, and the proof goes along the lines of the proof of Theorem~\ref{thm:ISS_SGT}, with significant simplifications. Hence in the following rather sketchy argument, we merely indicate the main differences to the detailed proof of Theorem~\ref{thm:ISS_SGT}. 
It is also possible to show Theorem~\ref{thm:AG_SGT} along the lines of the proof of the corresponding finite-dimensional counterpart \cite[Theorem 9]{DRW07}.

\begin{proof}[Sketch]
%
%\mir{Here probably first pick $x,u$.}
%Define $\phi_i:=\phi_i(\cdot,x,u)$ and 
%$\phi_{\neq i} : = (\phi_1,\ldots,\phi_{i-1}, \phi_{i+1},\ldots,\phi_n)$, $i=1,\ldots,n$.
%%With this notation we have that $\phi_i(t,x,u) = \phi_i(t,x_i,\phi_{\neq i}, u)$ for all $i=1,\ldots,n$ and for all $t\geq 0$.
Pick any $x \in X$ and any $u\in\Uc$ and define $\phi_i$ and $\phi_{\neq i}$ as in \eqref{eq:phi_neq_i}. The cocycle property \eqref{eq:Cocycle_property} and AG property of $\Sigma_i$ imply existence of a time $\tau_i=\tau_i(\varepsilon,x,u,t)$ which we can assume to be an increasing function of $t$, such that 
\begin{eqnarray}
  \sup_{\tau\geq \tau_i+t}\|\phi_i(\tau,x , u)\|_{X_i}& =&   \sup_{\tau\geq \tau_i}\|\phi_i(t+\tau,x , u)\|_{X_i} \nonumber\\
&\leq& \varepsilon + \sum_{j\neq i}\gamma_{ij}\left(\left\|\phi_{j,[t,+\infty)}\right\|_{\infty}\right) + \gamma_i(\|u\|_{\Uc}).
\label{eq:AG_SGT_tmp_estimate_1}
\end{eqnarray}
Note that the time $\tau_i$ depends on the tuple $(\varepsilon,\phi_i(t,x_i,\phi_{\neq i}, u),\phi_{\neq i}(\cdot + t), u(\cdot + t))$ 
in \eqref{eq:Cocycle_property}, but all these parameters depend on $(\varepsilon,x,u,t)$ only.

{
Defining $\tau^*(\varepsilon,x,u,t):=\displaystyle\max_{i=1,\ldots,n} \tau_i(\varepsilon,x,u,t)$,\quad $y_i(x,u):=  \displaystyle\lim_{t\to +\infty}\sup_{s\geq t}\|\phi_i(s, x , u)\|_{X_i}$,  $y(r,k):=(y_1(r,k),\ldots, y_n(r,k))^T \in\R^n_+$, and doing the same steps as in the proof of ISS small-gain theorem
we obtain
}
that there is $\xi\in\Kinf$ so that 
\begin{eqnarray}
\limsup_{t\to +\infty} \|\phi(s, x, u)\|_{X} = |y(r,k)| &\leq& \xi(|\vec{\gamma}(\|u\|_{\Uc})|),
\label{eq:AG_SGT_tmp_estimate_7}
\end{eqnarray}
which is precisely the asymptotic gain property of $\Sigma$.
\end{proof}

\begin{remark}
\label{rem:FC_is_needed_for_AG_SGT} 
A notable difference in the assumptions of the AG small-gain theorem, in compare to UGS and ISS small-gain theorems is that 
the forward completeness of the interconnection is required.
The assumption of forward-completeness cannot be relaxed to merely BIC property, as demonstrated in \cite[Section 4.2]{DRW07} already for couplings of ODE systems.
\end{remark}

\begin{definition}
\label{def:weak_ISS} 
A control system $\Sigma$ possessing the BIC property is called weakly ISS, provided $\Sigma$ is UGS and possesses AG property.
It is then forward complete, see Lemma~\ref{lem:ISS_UGS_and_BIC_property}.
\end{definition}

For ODEs, weak ISS is equivalent to ISS, but it is much weaker than ISS even for linear infinite-dimensional systems. 

As a combination of UGS and AG small-gain theorems we obtain 
\begin{theorem}[Weak ISS Small-gain theorem]
\label{thm:wISS_SGT} 
Let $\Sigma_i:=(X_i,PC_b(\R_+,X_{\neq i}) \tm \Uc,\bar{\phi}_i)$, $i=1,\ldots,n$ be control systems, where all $X_i$, $i=1,\ldots,n$ and $\Uc$ are normed  linear spaces.
Assume that $\Sigma_i$, $i=1,\ldots,n$ are forward complete weakly ISS systems with the same gain matrices $\Gamma^{wISS}$ for both AG and UGS property (formulated as in Lemma~\ref{lem:UGS_reformulation_n_systems}, Lemma~\ref{lem:AG_reformulation_n_systems}), and that the interconnection $\Sigma$ is well-defined and possesses the BIC property.

If $\Gamma^{wISS}_\boxplus$ satisfies the strong small gain condition \eqref{eq:Strong_SGC}, then $\Sigma$ is weakly ISS.    
\end{theorem}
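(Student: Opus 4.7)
The plan is to derive the weak ISS property of the interconnection $\Sigma$ by invoking, in sequence, the two small-gain theorems already established in the preceding sections, and using the fact that both AG and UGS estimates for each subsystem $\Sigma_i$ share the same gain matrix $\Gamma^{wISS}$.

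First, since each $\Sigma_i$ is weakly ISS, it is in particular UGS with gain matrix $\Gamma^{wISS}$ as formulated in Lemma~\ref{lem:UGS_reformulation_n_systems}. The hypothesis that $\Gamma^{wISS}_\oplus$ satisfies the strong small-gain condition \eqref{eq:Strong_SGC} then allows a direct application of the UGS small-gain theorem (Theorem~\ref{thm:UGS_SGT}), which, together with the assumed BIC property of $\Sigma$, yields that $\Sigma$ is forward complete and UGS. Alternatively, forward completeness follows also from Lemma~\ref{lem:ISS_UGS_and_BIC_property}.

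Second, now that $\Sigma$ is known to be forward complete, we are in a position to apply the AG small-gain theorem (Theorem~\ref{thm:AG_SGT}). Each $\Sigma_i$ is by assumption forward complete and AG with gain matrix $\Gamma^{wISS}$ (in the sense of Lemma~\ref{lem:AG_reformulation_n_systems}), and the strong small-gain condition for $\Gamma^{wISS}_\oplus$ is precisely what Theorem~\ref{thm:AG_SGT} requires. Consequently $\Sigma$ possesses the AG property.

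Combining these two conclusions, $\Sigma$ is both UGS and AG, which by Definition~\ref{def:weak_ISS} means that $\Sigma$ is weakly ISS. The only subtle point — and hence the main obstacle one must be aware of, though it is resolved cleanly in the present setup — is the forward completeness requirement in the AG small-gain theorem (cf.\ Remark~\ref{rem:FC_is_needed_for_AG_SGT}), which is why it is essential to apply the UGS result first and only then invoke the AG result. The requirement that the AG and UGS gains coincide (both equal to $\Gamma^{wISS}$) ensures that one and the same small-gain condition simultaneously underwrites both invocations.
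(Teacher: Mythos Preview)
Your proof is correct and follows essentially the same approach as the paper: first apply Theorem~\ref{thm:UGS_SGT} to obtain forward completeness and UGS of $\Sigma$, then use forward completeness together with Theorem~\ref{thm:AG_SGT} to conclude AG, and finally combine the two to get weak ISS. Your explicit remark on why the UGS step must precede the AG step (to secure the forward completeness hypothesis of Theorem~\ref{thm:AG_SGT}) is a welcome clarification that the paper leaves implicit.
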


\begin{proof}
As each subsystem of $\Sigma$ is UGS, $\Sigma$ possesses the BIC property, and $\Gamma^{wISS}_\boxplus$ is a gain operator for UGS property, 
Theorem~\ref{thm:UGS_SGT} shows that $\Sigma$ is forward complete and UGS.
As weakly ISS systems possess an AG property, $\Sigma$ is forward complete and 
$\Gamma^{wISS}_\boxplus$ is a gain operator for AG property satisfying the small-gain condition, 
Theorem~\ref{thm:AG_SGT} implies that $\Sigma$ has AG property.
Thus, $\Sigma$ is weakly ISS. 
\end{proof}

\subsection{ISS small-gain theorem in semimaximum formulation}
\label{sec:Semimaximum_formulation}

In Lemma~\ref{lem:ISS_reformulation_n_systems} we have reformulated the ISS property in a way that the total influence of subsystems is the sum of the internal gains. In some cases, this formulation is the most convenient, but in other cases, other restatements can be more useful.

Another important restatement, which mixes summation and maximization, is given in the next lemma (we again omit the proof of this result):
\begin{lemma}
\label{lem:ISS_reformulation_n_systems_semimax_form}
A forward complete system \emph{$\Sigma_i$ is ISS (in semimaximum formulation)} if there exist $\gamma_{ij}, \gamma_i \in \Kinf\cup\{0\},\ j=1,\ldots,n$ and $\beta_i \in \KL$, such that for all initial values $x_i \in X_i$, all internal inputs 
$w_{\neq i} := (w_1,\ldots,w_{i-1}, w_{i+1},\ldots,w_n) \in PC_b(\R_+,X_{\neq i})$, all external inputs $u \in\Uc$ and all $t \in\R_+$ the following holds:
\begin{eqnarray}
\label{eq:ISS_n_sys_max_semi-sum_form}
\phantom{aaaaaa}\|\bar{\phi}_i\big(t,x_i,(w_{\neq i}, u)\big)\|_{X_i}  
\leq \beta_i\left(\left\|x_i\right\|_{X_i},t\right) +  \max_{j\neq i}\big\{\gamma_{ij}\big(\left\|w_j\right\|_{[0,t]}\big)\big\} + \gamma_i\left(\left\|u\right\|_{\Uc}\right).
\end{eqnarray}
\end{lemma}
%
%\begin{proof}
%The proof is analogous to the proof of Lemma~\ref{lem:ISS_reformulation_n_systems} and is omitted.
%\end{proof}

We can collect all the internal gains $\gamma_{ij}$ from the semimaximum reformulation \eqref{eq:ISS_n_sys_max_semi-sum_form} 
of ISS again into the matrix $\Gamma$ and introduce instead of the operator $\Gamma_\boxplus$ the operator $\Gamma_\otimes:\R^n_+\to\R^n_+$ acting for $s=(s_1,\ldots,s_n)^T\in\R^n_+$ as
\begin{equation}
\Gamma_\otimes(s) := \Big(  \max_{j=1}^n \gamma_{1j}(s_j),\ldots,\max_{j=1}^n \gamma_{nj}(s_j)  \Big)^T.
\label{eq:Gamma-max}
\end{equation}
%Similarly to $\Gamma_\boxplus$, the operator $\Gamma_\otimes$ is a monotone (w.r.t. the partial order $\ge$ in $\R^n$) map, i.e.
%\begin{equation}\label{eq:monot_Gamma_times}
%s_1\ge s_2 \qrq\Gamma_\otimes(s_1)\ge\Gamma_\otimes(s_2).
%\end{equation}

A counterpart of the ISS small-gain Theorem~\ref{thm:ISS_SGT} for the semimaximum formulation of the ISS property is given by the next result:
\begin{theorem}[ISS Small-gain theorem in semimaximum formulation]
\label{thm:ISS_SGT_semimax_form} 
Let $\Sigma_i:=(X_i,PC_b(\R_+,X_{\neq i}) \tm \Uc,\bar{\phi}_i)$, $i=1,\ldots,n$ be control systems, where all $X_i$, $i=1,\ldots,n$ and $\Uc$ are normed  linear spaces.
Assume that $\Sigma_i$, $i=1,\ldots,n$ are forward complete systems, satisfying the ISS estimates as in Lemma~\ref{lem:ISS_reformulation_n_systems_semimax_form}, and that the interconnection $\Sigma=(X,\Uc,\phi)$ is well-defined and possesses the BIC property.

If $\Gamma^{ISS}_\otimes$ satisfies the strong small gain condition \eqref{eq:Strong_SGC}, then $\Sigma$ is ISS.    
\end{theorem}

\begin{proof}
Pick any $u\in\Uc$ and any $x\in X$. As the interconnection $\Sigma$ is well-defined, there is a certain $t_1 >0$ so that $(t,x,u)\in D_\phi$ for all $t \in [0,t_1]$.
Using the notation and the arguments introduced in the UGS small-gain theorem in the summation form (Theorem~\ref{thm:UGS_SGT}),
we obtain the following estimate for all $t\in[0,t_1]$:
\begin{equation*}
%\label{eq:UGS subsysteme_2_semimax_form}
 \supnorm{\phi}{[0,t]} \leq \vec{\sigma}(\einsnorm{\bar{\phi}}{0,x,u} ) + \Gamma^{ISS}_\otimes(\supnorm{\phi}{[0,t]}) + \vec{\gamma}(\|u\|_\Uc),
\end{equation*}
and as $\Gamma^{ISS}_\otimes$ satisfies the strong small gain condition \eqref{eq:Strong_SGC},
we can again apply Lemma~\ref{lem:boundedness_ID-Gamma_general case} to show UGS property of the interconnection
as it was done in Theorem~\ref{thm:UGS_SGT}.

The rest of the proof is similar to the proof of Theorem~\ref{thm:ISS_SGT} and is omitted.
\end{proof}

\subsection{ISS small-gain theorem in maximum formulation}
\label{sec:Maximum_formulation_new}

Finally, consider a reformulation of UGS and ISS in terms of maximums only, which is frequently used in ISS theory:
\begin{lemma}
\label{lem:UGS_reformulation_n_systems_max_form}
A forward complete system \emph{$\Sigma_i$ is UGS (in maximum formulation)} if there exist $\gamma_{ij}, \gamma_i \in \Kinf\cup\{0\},\ j=1,\ldots,n$ and $\sigma_i \in \Kinf$, such that for all initial values $x_i \in X_i$, all internal inputs 
$w_{\neq i} := (w_1,\ldots,w_{i-1}, w_{i+1},\ldots,w_n) \in PC_b(\R_+,X_{\neq i})$, all external inputs $u \in\Uc$ and all $t \in\R_+$ the following holds:
\begin{eqnarray}
\label{eq:UGS_n_sys_max}
\|\bar{\phi}_i\big(t,x_i,(w_{\neq i}, u)\big)\|_{X_i}  \leq
 \max_{j\neq i}\big\{ \sigma_i\left(\left\|x_i\right\|_{X_i}\right), \gamma_{ij}\big(\left\|w_j\right\|_{[0,t]}\big), \gamma_i\left(\left\|u\right\|_{\Uc}\right)\big\}.
\end{eqnarray}
\end{lemma}
%\begin{proof}
%The proof is analogous to the proof of Lemma~\ref{lem:UGS_reformulation_n_systems} and is omitted.
%\end{proof}

\begin{lemma}
\label{lem:ISS_reformulation_n_systems_max_form}
A forward complete system \emph{$\Sigma_i$ is ISS (in maximum formulation)} if there exist $\gamma_{ij}, \gamma_i \in \K\cup\{0\},\ j=1,\ldots,n$ and $\beta_i \in \KL$, such that for all initial values $x_i \in X_i$, all internal inputs 
$w_{\neq i} := (w_1,\ldots,w_{i-1}, w_{i+1},\ldots,w_n) \in PC_b(\R_+,X_{\neq i})$, all external inputs $u \in\Uc$ and all $t \in\R_+$ the following holds:
\begin{eqnarray}
\label{eq:ISS_n_sys_max}
\|\bar{\phi}_i\big(t,x_i,(w_{\neq i}, u)\big)\|_{X_i}  \leq
 \max_{j\neq i}\big\{ \beta_i\big(\left\|x_i\right\|_{X_i},t\big), \gamma_{ij}\big(\left\|w_j\right\|_{[0,t]}\big), \gamma_i\big(\left\|u\right\|_{\Uc}\big)\big\}.
\end{eqnarray}
\end{lemma}
The (componentwise) supremum of two elements $x,y \in \R^n$ we denote by $x \vee y$.

In the case of maximum formulation, one can relax the assumptions in UGS and ISS small-gain theorems and require only validity of the small gain condition 
\eqref{eq:SGC} (in contrast to the strong small gain condition \eqref{eq:Strong_SGC} in the case of summation and semimaximum formulations).
This relaxation is possibly due to the following criterion, shown in \cite[Theorem 6.4]{Rue10}, which replaces the role played by Lemma~\ref{lem:boundedness_ID-Gamma_general case} in the proof of sum-form ISS small-gain theorem.
\begin{proposition}
\label{prop:SGC-max-preserving-operators} 
The small gain condition \eqref{eq:SGC} holds with $A=\Gamma_\otimes$ if and only if
there is $\xi\in\Kinf$ such that for all $w,v\in \R^n_+$ it holds that 
\begin{eqnarray*}
w \leq \Gamma_\otimes(w) \vee v \srs |w| \leq \xi(|v|).
%\label{eq:}
\end{eqnarray*}
\end{proposition}
With Proposition~\ref{prop:SGC-max-preserving-operators} in mind, we can state the following max-form UGS small-gain theorem:
\begin{theorem}[UGS Small-gain theorem: maximum formulation]
\label{thm:UGS_SGT_max_form} 
Let $\Sigma_i:=(X_i,PC_b(\R_+,X_{\neq i}) \tm \Uc,\bar{\phi}_i)$, $i=1,\ldots,n$ be control systems, where all $X_i$, $i=1,\ldots,n$ and $\Uc$ are normed  linear spaces.
Assume that $\Sigma_i$, $i=1,\ldots,n$ are forward complete systems, satisfying the UGS estimates as in Lemma~\ref{lem:UGS_reformulation_n_systems_max_form},
 and that the interconnection $\Sigma=(X,\Uc,\phi)$ is well-defined and possesses the BIC property.

If $\Gamma^{UGS}_\otimes$ satisfies the small gain condition \eqref{eq:SGC}, then $\Sigma$ is forward complete and UGS.
\end{theorem}

\begin{proof}
The proof is analogous to the proof of Theorem~\ref{thm:UGS_SGT}, thus we state only the main differences.
Pick any $u\in\Uc$ and any initial condition $x\in X$.
Using the notation of  Theorem~\ref{thm:UGS_SGT}, we obtain the estimate
  \begin{equation}
    \label{eq:UGS subsysteme_2-max-form}
            \supnorm{\phi}{[0,t]} \leq \Gamma^{UGS}_\otimes(\supnorm{\phi}{[0,t]}) \vee \vec{\sigma}(\einsnorm{\bar{\phi}}{0,x,u} )  \vee \vec{\gamma}(\|u\|_\Uc).
  \end{equation}
Using Proposition~\ref{prop:SGC-max-preserving-operators}, there is $\xi\in\Kinf$ such that the following holds:
\begin{eqnarray}
  \label{eq:UGS subsysteme_4_max-form}
  | \supnorm{\phi}{[0,\tau]}| &\leq& \xi\big(\big|\vec{\sigma}(\einsnorm{\bar{\phi}}{0,x,u} )  \vee \vec{\gamma}(\|u\|_\Uc)\big|\big) 
                                                     \leq \xi\big(\big|\vec{\sigma}(\einsnorm{\bar{\phi}}{0,x,u})\big| + \big|\vec{\gamma}(\|u\|_\Uc)\big|\big)\nonumber\\
                                                     &\leq& \xi\big(2\big|\vec{\sigma}(\einsnorm{\bar{\phi}}{0,x,u})\big|\big) +  \xi\big(2\big|\vec{\gamma}(\|u\|_\Uc)\big|\big).
\end{eqnarray}
Arguing as in Theorem~\ref{thm:UGS_SGT}, the claim follows.
\end{proof}

The max-form ISS small-gain theorem we state as follows:
\begin{theorem}[ISS Small-gain theorem: maximum formulation]
\label{thm:ISS_SGT_max_form} 
Let $\Sigma_i:=(X_i,PC_b(\R_+,X_{\neq i}) \tm \Uc,\bar{\phi}_i)$, $i=1,\ldots,n$ be control systems, where all $X_i$, $i=1,\ldots,n$ and $\Uc$ are normed  linear spaces.
Assume that $\Sigma_i$, $i=1,\ldots,n$ are forward complete systems, satisfying the ISS estimates as in Lemma~\ref{lem:ISS_reformulation_n_systems_max_form}, and that the interconnection $\Sigma=(X,\Uc,\phi)$ is well-defined and possesses the BIC property.

If $\Gamma^{ISS}_\otimes$ satisfies the small gain condition \eqref{eq:SGC}, then $\Sigma$ is ISS.    
\end{theorem}

\begin{proof}
The proof is analogous to the proof of Theorem~\ref{thm:ISS_SGT}, thus we state only the main differences.

\noindent\textbf{UGS.} Follows by Theorem~\ref{thm:UGS_SGT_max_form}. 

\noindent\textbf{bUAG.} Proceeding as in the proof of Theorem~\ref{thm:ISS_SGT} (and using the same notation), we obtain the counterpart of \eqref{eq:ISS_SGT_tmp_estimate_6} in the form:
\begin{eqnarray}
y(r,k) &\leq& \Gamma^{ISS}_\otimes\left(y(r,k)\right) \vee \vec{\gamma}(2^{1-k}r).
\label{eq:ISS_SGT_tmp_estimate_6-max-form}
\end{eqnarray}
As we assume that $\Gamma^{ISS}_\otimes$ satisfies the small-gain condition \eqref{eq:SGC}, Proposition~\ref{prop:SGC-max-preserving-operators} shows
that there is $\xi\in\Kinf$ so that 
\begin{eqnarray*}
|y(r,k)| &\leq& \xi(|\vec{\gamma}(2^{1-k}r)|).
%\label{eq:ISS_SGT_tmp_estimate_7}
\end{eqnarray*}
Now again arguing as in as in Theorem~\ref{thm:ISS_SGT}, we obtain bUAG property of $\Sigma$.

\noindent \textbf{ISS.} Since $\Sigma$ is UGS $\wedge$ bUAG, Lemma~\ref{lem:UGS_und_bUAG_imply_UAG} implies ISS of $\Sigma$.
\end{proof}

\subsection{Tightness of ISS small-gain theorems}
\label{sec:Tightness_small-gain-theorem}

Small-gain theorems in sum-form for UGS, ISS, AG and weak ISS properties have been derived in this paper under the assumption that the strong small-gain condition \eqref{eq:Strong_SGC} holds. A natural question is whether the same result holds under weaker conditions.

Consider the planar system considered in \cite[Example 18]{DRW07}: 
\begin{subequations}
  \begin{align}
\dot{x}_1 &=  -x_1 + x_2(1-e^{-x_2}) + u(t),\label{eq:1A} \\
\dot{x}_2 &=  -x_2 + x_1(1-e^{-x_1}) + u(t).\label{eq:1B} 
  \end{align}
\label{eq:SGC_not_Strong_SGC}
\end{subequations}
In \cite[Example 18]{DRW07} it was shown that:
\begin{itemize}
    \item Both subsystems of \eqref{eq:SGC_not_Strong_SGC} are ISS in a summation formulation with the gains $\gamma_{12}(r) = \gamma_{21}(r) := r(1-e^{-r})$, $r\in\R_+$
    \item The corresponding operator $\Gamma_\boxplus$, satisfies the small-gain condition \eqref{eq:SGC}, which boils down to checking that $\gamma_{12}\circ\gamma_{21}(r) <r$ for all $r>0$
    \item $\Gamma_\boxplus$ does not satisfy the strong small-gain condition
    \item \eqref{eq:SGC_not_Strong_SGC} is not ISS
\end{itemize}
Hence in the statement of the ISS small-gain theorem in the summation formulation the requirement of the strong small-gain condition for $\Gamma_\boxplus$ cannot be weakened to the requirement of a small-gain condition for $\Gamma_\boxplus$.

As for the interconnection of 2 systems, the summation and semimaximum formulations of the ISS property coincide and $\Gamma_\boxplus = \Gamma_\otimes$, 
the same example shows the tightness of Theorem~\ref{thm:ISS_SGT_semimax_form} in the above sense.
Note that although the system \eqref{eq:SGC_not_Strong_SGC} is not ISS, it is 0-UGAS, 
which can be shown using the Lyapunov function $V(x_1,x_2)=x_1^2 + x_2^2$. 

In the next proposition we show that for any gain matrix for which the corresponding gain operator $\Gamma_{\boxplus}$
does not satisfy the small-gain condition \eqref{eq:SGC}, one can construct a system with this gain matrix, so that each subsystem of this system is ISS in the summation formulation, but the interconnection is not 0-UGAS.
\begin{proposition}
\label{prop:Tightness_SGC}
Let a gain matrix $\Gamma:=(\gamma_{ij})_{i,j=1,\ldots,n} \subset (\K \cup\{0\})^{n\times n}$, with $\gamma_{ii}=0$ for all $i=1,\ldots,n$ be given. If the corresponding gain operator $\Gamma_\boxplus$ does not satisfy the small-gain condition \eqref{eq:SGC}, then there exists $f:\R^n \times \R^m \to \R^n$ so that all subsystems of the ODE system \eqref{eq:Coupled_n_ODEs} are ISS with the gains $(\gamma_{ij})_{i,j=1,\ldots,n}$, but the whole interconnection \eqref{eq:ODE_coupled_sys} has a non-trivial equilibrium and thus is not 0-UGAS.
\end{proposition}

\begin{proof}
Let $\Gamma_\boxplus$ does not satisfy the small-gain condition \eqref{eq:SGC}.
Then there is $s\in \R^n_+\backslash\{0\}$ so that $\Gamma_\boxplus(s) \geq s$.
Take $\varepsilon\geq 0$ so that 
\begin{eqnarray}
(1-\varepsilon)\Gamma_\boxplus(s) = s,
\label{eq:gain-equality}
\end{eqnarray}
%For example, one can choose
%$\tilde{\gamma}_{ij}:=(1-\eps_{ij})\gamma_{ij}$
%for certain $\eps_{ij} \in [0,1)$, $i,j=1,\ldots,n$.
and enlarge the domain of definition of functions $\gamma_{ij}$ to $\R$, defining $\gamma_{ij}(-r)=-\gamma_{ij}(r)$ $\forall r>0, \ i,j = 1,\ldots n$. With such definitions we can consider the operator $\Gamma_\boxplus$ as an operator acting on $\R^n$, with the same defining formula
\eqref{eq:ps_gamma}.

Consider the following ODE system on $\R^n$:
\begin{eqnarray}
\dot{x} = -x + (1-\varepsilon)\Gamma_\boxplus(x).
\label{KGB_Gegenbeispiel}
\end{eqnarray}
The point $x=s$ is a non-trivial equilibrium of \eqref{KGB_Gegenbeispiel}, and hence it is not 0-UGAS.

However, all subsystems of \eqref{KGB_Gegenbeispiel} satisfy the following estimates
\begin{eqnarray*}
|x_i(t)| &\leq& \left|x_i(0)\right|e^{-t} + e^{-t}\int^t_0{e^{s}\sum_{j=1}^n(1-\varepsilon)\gamma_{ij}(|x_j(s)|)ds}
%\left|x_i(0)\right|e^{-t} + \left| \int^t_0{e^{s-t}\sum_{j=1}^n\tilde{\gamma}_{ij}(x_j(s))ds} \right| \leq 
%\left|x_i(0)\right|e^{-t} + e^{-t}\int^t_0{e^{s}\left|\sum_{j=1}^n\tilde{\gamma}_{ij}(x_j(s))\right|ds} \\
  %& = & \left|x_i(0)\right|e^{-t} + e^{-t}\int^t_0{e^{s}\sum_{j=1}^n\tilde{\gamma}_{ij}(|x_j(s)|)ds} \leq \left|x_i(0)\right|e^{-t}+e^{-t}\int^t_0{e^{s}ds} \sum_{j=1}^n\tilde{\gamma}_{ij} \left(\|x_{j}\|_{\infty} \right) \\
\leq \left|x_i(0)\right|e^{-t} + \sum_{j=1}^n\gamma_{ij} \left(\|x_{j}\|_{\infty} \right).
\end{eqnarray*}
and hence are ISS with the gains $(\gamma_{ij})_{i,j=1,\ldots,n}$.
This shows the claim.
\end{proof}

\begin{remark}
\label{rem:Tightness_small-gain_condition} 
It is possible to show a counterpart of Proposition~\ref{prop:Tightness_SGC} for the semimaximum formulation of ISS, see \cite[Theorem 1.5.9]{Mir12} and \cite[Theorem 1.5.10]{Mir12} for the corresponding Lyapunov version.
%Also tightness of ISS small-gain theorems has been studied in \cite{ItJ09}, see e.g. \cite[Theorem 3]{ItJ09} in a Lyapunov form for the couplings of ISS and integral ISS systems. 
\end{remark}

\subsection{Discussion and examples}
\label{sec:Discussion}

In this section, we discuss the main features of the obtained results and relate them to existing work. 

\subsubsection{Generality}
Small-gain theorems developed in this work are very general and applicable for networks of heterogeneous infinite-dimensional systems, consisting of components belonging to different system classes, with boundary and in-domain couplings, as long as these systems are well-defined and possess the BIC property.
In particular, our ISS small-gain theorem is applicable to the interconnections of ISS ODE systems, interconnections of evolution equation in Banach spaces and couplings of $n$ time-delay systems. For all these system classes, briefly considered in \linebreak
Sections~\ref{sec:Example: interconnections of ODE systems}, \ref{sec:Example: interconnections of retarded systems}, \ref{sec:Example: interconnections of evolution equations in Banach spaces}, there are natural conditions guaranteeing well-posedness and BIC property \cite{CaH98}, \cite[Chapter 1]{KaJ11b}.
Well-posedness of linear infinite-dimensional systems is treated systematically by methods of admissibility theory, see
\cite{TuW14, JaP04}.
% \cite{JaP04}.
For a systematic analysis of well-posedness of infinite-dimensional systems and for an overview of challenges arising in this way we refer to \cite{TuW14}. For many other classes of systems, in particular, for boundary couplings of heterogeneous PDE systems, well-posedness analysis is a challenging open problem right now.

\subsubsection{Flexibility}
Existing small-gain theorems for delay systems \cite{PTM06, PDT13,TWJ12} and evolution equations in Banach spaces 
\cite{BLJ18} are proved in the maximum formulation. \emph{In this paper, we show the small-gain theorems in sum, max and semimax formulations}, which gives much more flexibility in the choice of right tools for ISS analysis of interconnections. 
Together with the \emph{tightness} of the obtained small-gain theorems, this versatility allows for less restrictive conditions for the stability of networks. Our approach can be also combined with the formalism of so-called \emph{monotone aggregation functions}, see \cite{DRW10, Rue10}, which could further extend the flexibility of obtained results.

\subsubsection{Special cases}

Although we stress upon the significance of small-gain theorems for broad classes of infinite-dimensional systems, our results are novel and powerful already for networks of $n$ time-delay systems or evolution equations in Banach spaces. 
Furthermore, our results hold under minimal requirements on regularity of the systems, which allows to obtain even the classical small-gain theorem for networks of $n$ ODE systems under lesser regularity requirements on the right hand side.

\textbf{ODE systems.} An ISS small-gain theorem for networks of $n$ ODE systems has been shown in \cite[Theorem 11]{DRW07} under assumption that the right-hand side $f$ in \eqref{eq:ODE_coupled_sys} is at least Lipschitz continuous with respect to $x$ on bounded balls (see \cite[p. 97]{DRW07}), or possibly even Lipschitz continuous with respect both $x$ and $u$, as in \cite{DRW07} 
the characterization that ISS $\Iff$ UGS $\wedge$ AG is used for the  proof the ISS small-gain theorem, which was shown in \cite{SoW96} under requirement that $f$ is Lipschitz with respect to both arguments.
%
%
%Besides local existence and uniqueness of solutions to \eqref{eq:ODE_coupled_sys}, this gives also many nice additional properties 
%(boundedness implies continuation property, Lipschitz continuity of the flow map, boundedness of reachability sets for forward complete systems \cite{LSW96}, equivalence between LIM and ULIM property \cite[Proposition 13]{MiW18b} etc.) 
%This allows to use in the proof of the small-gain theorem for ODE systems \cite[Theorem 11]{DRW07} a criterion that ISS $\Iff$ UGS $\wedge$ AG, which was shown in \cite{SoW96} under assumption of Lipschitz continuity of $f$ in $x$, which is essentially used in \cite{SoW96}.

In contrast, in our infinite-dimensional small-gain theorems we require only local existence and uniqueness of solutions (as a part of the definition of a control system) and BIC property for a coupled system. Under this much milder regularity assumption we can invoke the characterization that ISS $\Iff$ UGS $\wedge$ bUAG, and prove the ISS small-gain theorem.

\textbf{Time-delay systems.} 
The interconnections of time-delay systems, briefly introduced in Section~\ref{sec:Example: interconnections of retarded systems} are a special case of control systems, for which our results can be applied. We assume that the subsystems, as well as the coupled system, are well-posed as a control system, and possess BIC property.
Conditions, ensuring this, can be found, e.g., in \cite[Section 7]{MiP20}.

In \cite{TWJ12} ISS small-gain theorems in maximum formulation for couplings of $n\ge 2$ time-delay systems have been obtained by using a Razumikhin-type argument, motivated by \cite{Tee98}. 
In this approach, the delayed state in the right-hand side of a time-delay system is treated as an input to the system, which makes the time-delay system a delay-free system with additional input. To make this approach work, it is assumed in \cite{TWJ12}, that each subsystem in absence of the external inputs is ISS with respect to delayed state input with a gain which is strictly less than 1.
On the one hand, if this assumption holds, then the application of the max-form ISS small-gain theorem \cite[Theorem 2]{TWJ12} becomes simpler than the analysis by means of the max-form ISS small-gain theorem in our paper (Theorem~\ref{thm:ISS_SGT_max_form}).
On the other hand, this additional assumption restricts the class of systems, which can be studied using the approach in \cite{TWJ12}.
In contrast, in our max-form ISS small-gain theorem (Theorem~\ref{thm:ISS_SGT_max_form}) we do not need such an assumption.

In most papers on small-gain theory for delay systems \cite{PTM06, PDT13, TWJ12}, the small-gain theorems are derived in the max-form. Next, we provide an academic example showing that considering only 
max-formulations of the ISS property is too restrictive, and summation and semimaximum ISS small-gain theorems, shown in this paper, can be more natural and less conservative for certain classes of interconnected systems.

\begin{example}
\label{examp:Time-delay-systems-3-systems} 
Consider an interconnection of 3 scalar time-delay systems without external inputs.
\begin{subequations}
\begin{eqnarray}
\dot{x}_1(t) &=& -x_1(t) + x^3_2(t-1) + x^2_3(t-1),\\
\dot{x}_2(t) &=& - x_2(t) + ax_3^{\frac{1}{3}}(t-1),\\
\dot{x}_3(t) &=& - x_3(t) + b|x_1(t-1)|^{\frac{1}{2}}.
\end{eqnarray}
\label{eq:TDS-example-3-subsystems}
\end{subequations}
Here $a,b\in\R$ are some constants. This interconnection as defined in Section~\ref{sec:Example: interconnections of retarded systems}  is well-posed and has BIC property. We use also the notation for the spaces $X_i$ and $X$ from this section.
We denote the state of the $i$-th subsystem at time $t$ with an initial condition $x_i^0$ by
$\phi_i(t,x_i^0)$ and we write $\phi_i^0(t,x_i^0):= \phi_i(t,x_i^0)(0)$.

We derive sufficient conditions for the coefficients $a,b$ guaranteeing the stability of the network based on the ISS small-gain theorem in summation formulation. First we have to verify ISS properties of subsystems. For the first subsystem we obtain:
\begin{eqnarray*}
|\phi_1^0(t,x_1^0)|  &\leq& e^{-t}|x_1^0(0)| + \int_0^t e^{s-t} |x^3_2(s-1) + x^2_3(s-1)| ds \\
                               &\leq& e^{-t}\|x_1^0\|_{X_1} + \|x_2\|^3_{[-1,t]} + \|x_3\|^2_{[-1,t]}.
\end{eqnarray*}
Hence we obtain for the first system the ISS estimate in the form
\begin{eqnarray}
\|\phi_1(t,x_1^0)\|_{X_1} &\leq& e^{-t}\|x_1^0\|_{X_1} + \|x_2\|^3_{[-1,t]} + \|x_3\|^2_{[-1,t]}.
\label{eq:Estimate-1st-subsystem}
\end{eqnarray}
Analogously, one can obtain the estimates for $x_2$-subsystem and $x_3$-subsystem:
\begin{subequations}
\label{eq:Estimates-2-3-subsystems}
\begin{eqnarray}
\|\phi_2(t,x_2^0)\|_{X_2} &\leq& e^{-t}\|x_2^0\|_{X_2} + |a|\|x_1\|^{\frac{1}{3}}_{[-1,t]}, \label{eq:Estimate-2nd-subsystem}\\
\|\phi_3(t,x_3^0)\|_{X_3} &\leq& e^{-t}\|x_3^0\|_{X_3} + |b|\|x_1\|^{\frac{1}{2}}_{[-1,t]}. \label{eq:Estimate-3rd-subsystem}
\end{eqnarray}
\end{subequations}
Thus, all subsystems are ISS and the corresponding gains are: 
\begin{eqnarray}
\gamma_{12}(r) = r^3, \quad \gamma_{13}(r) = r^2, \quad \gamma_{21}(r) = ar^{\frac{1}{3}}, \quad \gamma_{31}(r) = br^{\frac{1}{2}}.
\label{eq:Example-SGC-3-TDS-gains}
\end{eqnarray}
It is easy to see that the conditions 
\begin{eqnarray}
|a|<1\ \ \wedge \ \ |b|<1
\label{eq:Example-SGC-3-TDS-conditions-a-b}
\end{eqnarray}
guarantee that the strong small-gain condition \eqref{eq:Strong_SGC} holds for $\Gamma_\boxplus$ with internal gains \eqref{eq:Example-SGC-3-TDS-gains}, and Theorem~\ref{thm:ISS_SGT} shows ISS of the network under the conditions \eqref{eq:Example-SGC-3-TDS-conditions-a-b}.

Note that ISS small-gain theorem in the maximum formulation (Theorem~\ref{thm:ISS_SGT_max_form} or \cite[Theorem 2]{TWJ12}) could be also applied to the system 
\eqref{eq:TDS-example-3-subsystems}. Using the inequality 
$a+b \leq \max\{(1+\frac{1}{\varepsilon})a,(1+\varepsilon)b\}$, which is valid for all $a,b\geq 0$ and all $\varepsilon>0$,
the inequalities \eqref{eq:Estimate-2nd-subsystem} and \eqref{eq:Estimate-3rd-subsystem} can be reformulated in the maximum form as follows:
\begin{subequations}
\label{eq:Estimates-2-3-subsystems-max}
\begin{eqnarray}
\|\phi_2(t,x_2^0)\|_{X_2} &\leq& \max\big\{(1+\tfrac{1}{\varepsilon})e^{-t}\|x_2^0\|_{X_2}, (1+\varepsilon) |a|\|x_1\|^{\frac{1}{3}}_{[-1,t]}\big\}, \label{eq:Estimate-2nd-subsystem-max}\\
\|\phi_3(t,x_3^0)\|_{X_3} &\leq& \max\big\{(1+\tfrac{1}{\varepsilon})e^{-t}\|x_3^0\|_{X_3}, (1+\varepsilon) |b|\|x_1\|^{\frac{1}{2}}_{[-1,t]}\big\}. \label{eq:Estimate-3rd-subsystem-max}
\end{eqnarray}
\end{subequations}
As $\varepsilon>0$ can be chosen arbitrarily small, this enlargement of gains is not critical for the application of the small-gain theorem, although this is done at the cost of very big enlargement of the transients.
However, max-reformulation of the ISS estimate \eqref{eq:Estimate-1st-subsystem} cannot be performed without a substantial increase of at least one of the internal gains
$\gamma_{12}$ and $\gamma_{13}$, and thus the application of the max-form ISS small-gain theorems (Theorem~\ref{thm:ISS_SGT_max_form} or \cite[Theorem 2]{TWJ12})
leads to more restrictive estimates at the coefficients $a,b$.

Note that if the dynamics of $x_1$-subsystem would have the form 
\[
\dot{x}_1(t) = -x_1(t) + \max\{x^3_2(t-1),x^2_3(t-1)\},
\]
then the most suitable reformulation for the ISS property of subsystems would be the semimaximum formulation. 
\end{example}

\textbf{Weak ISS.} Small-gain theorems in the maximum formulation for weak ISS of coupled time-delay systems have been derived in \cite{PTM06,PDT13}. In this paper we derived weak ISS small-gain theorem in the summation formulation (Theorem~\ref{thm:wISS_SGT}), and semimaximum and maximum formulations can be derived as discussed in Sections~\ref{sec:Semimaximum_formulation}, \ref{sec:Maximum_formulation_new}.\linebreak
This makes it possible to obtain less conservative conditions for weak ISS of interconnections, as discussed in Example~\ref{examp:Time-delay-systems-3-systems}, as well as to treat a larger class of interconnections. 
Note however that in \cite{PTM06,PDT13} also an extension of the weak ISS small-gain theorem to the case of systems with outputs has been shown (weak IOS small-gain theorem), which we do not investigate in this work.

\textbf{Evolution equations with Lipschitz nonlinearities.} 
In \cite{BLJ18} small-gain theorems for couplings of $n$ evolution equations in Banach spaces (see Section~\ref{sec:Example: interconnections of evolution equations in Banach spaces}) 
have been derived, by using a rather different proof technique, which is applicable if the small-gain property is formulated in the maximization formulation. 
Specialized for this class of systems, our ISS small-gain theorem in the maximum formulation is equivalent to \cite[Theorem 3]{BLJ18}, as the cyclic small-gain condition used in \cite[Theorem 3]{BLJ18} is equivalent to the small-gain condition~\eqref{eq:SGC} due to \cite[Theorem 6.4]{Rue10}.
However, as we provide also sum and semimax formulations of ISS small-gain theorem, our results are more flexible. 
At the same time, note that in \cite{BLJ18} the systems with outputs have been considered, and the small-gain theorems for IOS property (which extends ISS for systems with outputs) have been shown, which was not studied in this work. Thus, the developments in \cite{BLJ18} are complementary to this paper.

\section{Characterizations of ISS by means of ULIM with \q{bounded inputs}}
\label{sec:New_ISS_Characterizations}
%{\color{red}
%Probably for DELETION
%\begin{remark}
%\label{rem:AG_reformulation} 
%AG property can be reformulated in another way:
%by cocycle property we have for all $t,s\geq 0$, all $x\in X$ and $u\in\Uc$ that
%\[
%\phi(t+s,x,u)= \phi(t,\phi(s,x,u),u(\cdot + s)).
%\]
%As $\Sigma$ is AG, for any $\varepsilon>0$ there is a time $\tilde{\tau} = \tilde{\tau}(\varepsilon,\phi(s,x,u),u)$
%so that
%\[
%t \geq \tau\ \quad \Rightarrow \quad \|\phi(t,\phi(s,x,u),u(\cdot + s))\|_X \leq \eps + \gamma(\|u(\cdot + s)\|_{\Uc}).
%\]
%Define $\tau(\varepsilon,x,u):=\tilde{\tau}(\varepsilon,\phi(s,x,u),u) + s$.
%The last implication is equivalent to 
%\[
%t \geq \tau \ \quad \Rightarrow \quad \|\phi(t,x,u)\|_X \leq \eps + \gamma(\|u(\cdot + s)\|_{\Uc}).
%\]
%\end{remark}
%}

As bUAG property has been useful in the proof of ISS small-gain theorems, one can expect, that it can be useful also in other contexts. Hence in this section, we characterize ISS in terms of bUAG and bULIM properties, which gives characterizations, which are a bit stronger and more flexible than those, proved in \cite{MiW18b}.

%{\color{red} Bad numeration in the next definition}

Weaker counterparts of asymptotic gain properties are so-called limit properties.
\begin{definition}
We say that a forward complete system $\Sigma=(X,\Uc,\phi)$ has the
\begin{itemize}
    %\item[(i)] \textit{limit property (LIM)} if there exists
          %$\gamma\in\K\cup\{0\}$ such that 
%for all $x\in X$, $u \in \Uc$ and $\eps>0$ there is a $t=t(x,u,\eps)$:
%\[
%\|\phi(t,x,u)\|_X \leq \eps + \gamma(\|u\|_{\Uc}).
%\]
  %\item[(ii)] \textit{strong limit property (sLIM)}, if there exists $\gamma\in\K\cup\{0\}$ so that for every $\eps>0$ 
%and for every $x\in X$ there exists $\tau = \tau(\eps,x)$ such that 
%for all $u\in\Uc$ there is a $t\leq \tau$:
%\begin{eqnarray}
%\|\phi(t,x,u)\|_X \leq \eps + \gamma(\|u\|_{\Uc}).
%\label{eq:sLIM_ISS_section}
%\end{eqnarray}
    \item[(i)] \emph{ bounded input uniform limit property (bULIM)}, if there exists
    $\gamma\in\Kinf\cup\{0\}$ so that for every $\eps>0$ and for every $r>0$ there
    exists a $\tau = \tau(\eps,r)$ such that 
for all $x$ with $\|x\|_X \leq r$ and all $u\in\Uc$ there is a $t\leq
\tau$ such that 
\begin{eqnarray}
\|\phi(t,x,u)\|_X \leq \eps + \gamma(\|u\|_{\Uc}).
\label{eq:ULIM_ISS_section}
\end{eqnarray}
  \item[(ii)] \emph{uniform limit property (ULIM)}, if there exists
    $\gamma\in\Kinf\cup\{0\}$ so that for every $\eps>0$ and for every $r>0$ there
    exists a $\tau = \tau(\eps,r)$ such that 
for all $x$ with $\|x\|_X \leq r$ and all $u\in\Uc$ there is a $t\leq
\tau$ such that \eqref{eq:ULIM_ISS_section} holds.
%\begin{eqnarray}
%\|\phi(t,x,u)\|_X \leq \eps + \gamma(\|u\|_{\Uc}).
%\label{eq:ULIM_ISS_section}
%\end{eqnarray}
\end{itemize}
\end{definition}
It is easy to see that bUAG implies bULIM and UAG implies ULIM property.

Even nonuniformly globally asymptotically stable forward complete systems do not always have uniform bounds for their reachability sets on finite
intervals (see \cite{MiW18b}). Systems exhibiting such bounds
deserve a special name.
\begin{definition}
\label{Assumption3}
We say that a forward complete system $\Sigma=(X,\Uc,\phi)$ has \emph{bounded reachability sets (BRS)}, if for any $C>0$ and any $\tau>0$ it holds that 
\[
\sup\big\{
\|\phi(t,x,u)\|_X \midset \|x\|_X\leq C,\ \|u\|_{\Uc} \leq C,\ t \in [0,\tau]\big\} < \infty.
\]
\end{definition}
Clearly, UAG property implies bUAG property. It is not completely clear, whether the converse holds without further assumptions, as, e.g., BRS property.
However, for a system satisfying bUAG property, it is not possible in general to verify UAG (and even sAG) property without an increase of the gain, as shown in the next example
\begin{example}
\label{examp:AG_UAG}
Let $X:=\R$ and $\Uc:=L_{\infty}(\R_+,\R)$. Consider a system
        \begin{equation}
        \label{Ex:AG_UAG_Difference}
        \dot{x}=-\tfrac{1}{1+|u(t)|} x.
        \end{equation}
%with the state space $X:=\R$ and the input space $\Uc:=L_{\infty}(\R_+,\R)$.
It has a BRS property as $|\phi(t,x,u)|\le |x|$ for all 
$t\geq 0$, $x\in\R$, $u\in\Uc$. 

Let us show that \eqref{Ex:AG_UAG_Difference} has a bUAG property with a zero gain.
Pick any $u\in\Uc$, $x\in\R$ and $t\geq 0$. The corresponding solution of~\eqref{Ex:AG_UAG_Difference} can be estimated as 
\begin{eqnarray*}
|\phi(t,x,u)| &=& e^{-\int_0^t \frac{1}{1+|u(s)|} ds} |x| 
 \leq e^{-\int_0^t \frac{1}{1+\|u\|_{\infty}} ds} |x| 
 = e^{- \frac{1}{1+\|u\|_{\infty}}t} |x|.
%\label{eq:CounterEx_estimate}
\end{eqnarray*}
Now pick any $r>0$ and any $\varepsilon>0$. For any $u\in B_{r,\Uc}$ it holds that 
$|\phi(t,x,u)| \leq  e^{- \frac{1}{1+r}t} r$. 
%\begin{eqnarray*}
%|\phi(t,x,u)| &\leq&  e^{- \frac{1}{1+r}t} r.
%%\label{eq:CounterEx_estimate}
%\end{eqnarray*}
Now, let $\tau>0$ be so that  $e^{- \frac{1}{1+r}\tau} r = \eps$ (clearly, such $\tau$ exists).

Overall, for all $t\geq \tau$, all $x \in B_r$ and all $u\in B_{r,\Uc}$ it holds that 
$|\phi(t,x,u)| \leq \varepsilon$.
This shows bUAG property of \eqref{Ex:AG_UAG_Difference} with a zero gain.
At the same time, it is known that \eqref{Ex:AG_UAG_Difference} does not have a strong AG property with a zero gain, which is weaker than UAG with a zero gain, see \cite[Remark 4]{Mir16}.
\end{example}

Similarly to Lemma~\ref{lem:UGS_und_bUAG_imply_UAG}, the following result can be shown:  
\begin{lemma}
\label{lem:UGS_und_bULIM_imply_ULIM}
Let $\Sigma=(X,\Uc,\phi)$ be a control system. If $\Sigma$ is UGS and bULIM, then $\Sigma$ is ULIM.
\end{lemma}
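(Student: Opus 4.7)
The plan is to mirror the proof of Lemma~\ref{lem:UGS_und_bUAG_imply_UAG} almost verbatim, just replacing the quantifier "for all $t \geq \tau$" by "there is $t \leq \tau$" in the conclusion. The point is that for inputs of norm at most $r$ the bULIM hypothesis is directly applicable, whereas for inputs whose norm exceeds $r \geq \|x\|_X$ the UGS estimate already bounds the trajectory uniformly in $t$ by a pure $\K$-function of $\|u\|_\Uc$, so any $t\leq\tau$ works; the two regimes are stitched together with a single gain obtained by summing the bULIM-gain and the UGS-gain.

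More precisely, I would proceed as follows. Fix $\varepsilon > 0$ and $r > 0$. Let $\gamma_b \in \K \cup \{0\}$ and $\tau=\tau(\varepsilon,r) < \infty$ be the gain and time from the bULIM property, and let $\sigma\in\Kinf$, $\gamma_s \in \Kinf \cup \{0\}$ be the comparison functions from the UGS estimate (WLOG one may replace $\gamma_b$ and $\gamma_s$ by their pointwise maximum, or just set $\tilde\gamma := \gamma_b + \sigma + \gamma_s$). Now consider any $x \in B_r$ and any $u \in \Uc$, and distinguish two cases. If $\|u\|_\Uc \leq r$, then by the bULIM hypothesis there exists $t \leq \tau$ with
\[
\|\phi(t,x,u)\|_X \leq \varepsilon + \gamma_b(\|u\|_\Uc) \leq \varepsilon + \tilde\gamma(\|u\|_\Uc).
\]
If instead $\|u\|_\Uc > r$, then in particular $\|u\|_\Uc > \|x\|_X$, so by UGS we have for every $t \geq 0$
\[
\|\phi(t,x,u)\|_X \leq \sigma(\|x\|_X) + \gamma_s(\|u\|_\Uc) \leq \sigma(\|u\|_\Uc) + \gamma_s(\|u\|_\Uc) \leq \tilde\gamma(\|u\|_\Uc),
\]
so the estimate $\|\phi(t,x,u)\|_X \leq \varepsilon + \tilde\gamma(\|u\|_\Uc)$ holds in particular for some (indeed every) $t \leq \tau$.

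Combining the two cases shows that with the same $\tau(\varepsilon,r)$ as in bULIM and with gain $\tilde\gamma \in \Kinf \cup \{0\}$, the system $\Sigma$ satisfies the ULIM property. There is no genuine obstacle here: the argument is purely algebraic manipulation of the comparison estimates, and the only mild subtlety is the bookkeeping of unifying the bULIM-gain with the UGS-gain into a single $\K$-class function, which is handled by summation. In fact, this is the verbatim analogue of the UGS~$\wedge$~bUAG~$\Rightarrow$~UAG argument from Lemma~\ref{lem:UGS_und_bUAG_imply_UAG}, with the existential quantifier over $t$ (characteristic of limit properties) in place of the universal one (characteristic of asymptotic gain properties).
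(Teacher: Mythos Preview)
Your proof is correct and is precisely the argument the paper intends: the paper gives no explicit proof of this lemma but simply states that it ``can be shown similarly to Lemma~\ref{lem:UGS_und_bUAG_imply_UAG}'', which is exactly the case split on $\|u\|_\Uc\lessgtr r$ that you carry out. The only cosmetic difference is that in Lemma~\ref{lem:UGS_und_bUAG_imply_UAG} the paper takes the maximum of the two gains rather than their sum, but either choice works.
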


Although the notion of bUAG is used not as widely as the standard UAG (but see, e.g., \cite{Tee98}, \cite[Proposition 1.4.3.]{Mir12}), but it is in a certain sense even more natural than UAG. 
Indeed, in characterizations of ISS in terms of UAG and ULIM properties shown in \cite{MiW18b} the UAG (and uniform limit) properties have been applied almost exclusively for uniformly bounded inputs, and as we see in the following theorem, the validity of such characterizations is retained if we use bUAG and bULIM properties instead of UAG and ULIM properties.

\begin{definition}
\label{Assumption2}
Consider a system $\Sigma=(X,\Uc,\phi)$.
We call $0 \in X$ an \emph{equilibrium point} (of the undisturbed system)  if
$\phi(t,0,0) = 0$ for all $t \geq 0$.
\end{definition}

For characterizations of ISS we need one more notion
\begin{definition}
\label{def:RobustEquilibrium_Undisturbed}
Consider a system $\Sigma=(X,\Uc,\phi)$ with equilibrium point $0\in X$.
We say that 
$\phi$ is \emph{continuous at the equilibrium} if   
%
%the equilibrium point is robust if
% \begin{itemize}
%     \item $\phi(t,0,0) = 0$ for all $t \geq 0$
%    \item 
$\forall \eps>0$, $\forall h>0$ \ $\exists \delta = \delta (\eps,h)>0$: 
%\vspace*{-0.7cm}
%\end{itemize}
\begin{eqnarray}
 t\in[0,h]\ \wedge \ \|x\|_X \leq \delta\ \wedge \ \|u\|_{\Uc} \leq \delta \; \Rightarrow \;  \|\phi(t,x,u)\|_X \leq \eps.
\label{eq:RobEqPoint}
\end{eqnarray}
In this case we will also say that $\Sigma$ has the CEP property.
\end{definition}

The following characterizations of ISS refine the main result in \cite{MiW18b}
\begin{theorem}
\label{thm:New_ISS_Characterizations}
Let $\Sigma=(X,\Uc,\phi)$ be a forward complete control system. The following statements are equivalent:
\vspace{-4mm}
   \begin{multicols}{2}
\begin{itemize}
    \item[(i)] $\Sigma$ is ISS.
    %\item[(ii)] $\Sigma$ is UAG, CEP, and BRS.
    %\item[(ii)] $\Sigma$ is UAG, CEP, and BRS.
    \item[(ii)] $\Sigma$ is UAG and UGS.
    \item[(iii)] $\Sigma$ is bUAG and UGS.
    \item[(iv)] $\Sigma$ is bULIM and UGS.
    \item[(v)] $\Sigma$ is bULIM, ULS and BRS.
    \item[(vi)] $\Sigma$ is bUAG, CEP and BRS.
\end{itemize}
 \end{multicols}
\end{theorem}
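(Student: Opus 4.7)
The proof splits into an inner diamond (i)$\Leftrightarrow$(ii)$\Leftrightarrow$(iii)$\Leftrightarrow$(iv) and the attachment of (v) and (vi) to it. For the inner diamond, the trivial links are (i)$\Rightarrow$(ii) (read off from the ISS bound \eqref{iss_sum}), (ii)$\Rightarrow$(iii) (UAG implies bUAG by definition), and (iii)$\Rightarrow$(iv) (pick $t=\tau$ in \eqref{UAG_Absch}). The return (iii)$\Rightarrow$(i) is Lemma~\ref{lem:UGS_und_bUAG_imply_UAG}; for (iv)$\Rightarrow$(iii) I would first use Lemma~\ref{lem:UGS_und_bULIM_imply_ULIM} to upgrade bULIM to ULIM under UGS and then invoke the characterization UGS $\wedge$ ULIM $\Rightarrow$ ISS from \cite[Theorem 5]{MiW18b}, since ISS gives UAG and therefore bUAG.

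For the outer block, (i)$\Rightarrow$(v) and (i)$\Rightarrow$(vi) are straightforward: ISS delivers UGS (hence ULS and BRS) and UAG (hence bUAG, bULIM), and CEP is read off from the ISS estimate by setting $t=0$ for small $\|x\|_X,\|u\|_\Uc$. The substantive steps are (v)$\Rightarrow$(iv) and (vi)$\Rightarrow$(iii); in each case the goal is to produce UGS from a local property (ULS or CEP), BRS, and a bounded-input convergence property (bULIM or bUAG). The plan is to mimic the corresponding implications established in \cite[Theorem 5]{MiW18b} with UAG/ULIM. As the remark preceding the theorem observes, every use of UAG or ULIM in those arguments is in fact applied only to uniformly bounded inputs, which is precisely the regime in which bUAG and bULIM deliver identical information. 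Concretely, fix $r>0$ and $\|x\|_X,\|u\|_\Uc\le r$; on $[0,\tau(\varepsilon,r)]$ the BRS property supplies a bound $\mu(r)$, while for $t\ge\tau(\varepsilon,r)$ the convergence property yields a bound of the form $\varepsilon+\tilde\gamma(\|u\|_\Uc)$ either directly (case (vi)) or, in case (v), by invoking ULS through the cocycle property at a time $t^*\le\tau(\varepsilon,r)$ at which bULIM guarantees $\|\phi(t^*,x,u)\|_X\le\varepsilon+\gamma(\|u\|_\Uc)$. Taking the maximum of the two time-range bounds and then majorizing the short-time factor by a $\Kinf$-function $\sigma$ with $\sigma(a+b)\le\sigma(2a)+\sigma(2b)$ produces the desired UGS estimate, after which (v) becomes (iv) and (vi) becomes (iii) and the inner diamond closes the argument.

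The main obstacle will be arranging the comparison-function bookkeeping so that the resulting majorant $\sigma$ in the UGS estimate actually lies in $\Kinf$, i.e.\ $\sigma(r)\to 0$ as $r\to 0$, despite the fact that the convergence time $\tau(\varepsilon,r)$ may blow up as $r\to\infty$. This is where CEP in case (vi) and the $\Kinf$-part of the ULS estimate in case (v) become indispensable, since they are precisely what forces $\mu(r)\to 0$ at the origin. A further subtlety arises in (v)$\Rightarrow$(iv): the ULS estimate restricts the input norm to a local radius $r_{\mathrm{ULS}}$, so the cocycle step must be combined with an iterative refinement (bULIM followed by ULS and then bULIM again) to cover inputs larger than $r_{\mathrm{ULS}}$, the argument being the same as in the ULIM/ULS/BRS case of \cite{MiW18b}. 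Since the construction is essentially identical to the one already carried out there, and all uses of UAG/ULIM in that proof can be replaced by bUAG/bULIM at no cost, I expect the refinement to go through without any genuinely new ideas beyond the observation that the bounded-input restriction in bUAG/bULIM is the only regime actually exploited.
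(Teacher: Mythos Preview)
Your proposal is correct and rests on the same core observation the paper makes explicitly: every use of UAG/ULIM in the \cite{MiW18b} arguments is in fact restricted to uniformly bounded inputs, so bUAG/bULIM can be substituted without change. The only structural difference is that the paper runs a single cycle (vi)$\Rightarrow$(v)$\Rightarrow$(iv)$\Rightarrow$(iii)$\Rightarrow$(ii)$\Rightarrow$(vi) together with (ii)$\Leftrightarrow$(i), whereas you handle (v) and (vi) separately; in particular the paper obtains (vi)$\Rightarrow$(v) by showing bUAG $\wedge$ CEP $\Rightarrow$ ULS (a bounded-input variant of \cite[Lemma~6]{MiW18b}) and then (v)$\Rightarrow$(iv) via bULIM $\wedge$ BRS $\Rightarrow$ UGB (a bounded-input variant of \cite[Proposition~10]{MiW18b}) combined with UGB $\wedge$ ULS $\Leftrightarrow$ UGS. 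This chaining is a bit more economical than your direct (vi)$\Rightarrow$(iii) and avoids the ``iterative refinement'' you anticipate in (v)$\Rightarrow$(iv), but the content is the same.
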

\vspace{-3mm}
\begin{proof}
The proof goes mostly along the lines of the characterizations of ISS, obtained in \cite[Theorem 5]{MiW18b}.

(vi) $\Rightarrow$ (v). Doing some minimal changes in \cite[Lemma 6]{MiW18b} one can show that bUAG $\wedge$ CEP implies ULS property.

(v) $\Rightarrow$ (iv). Doing some minimal changes in \cite[Proposition 10]{MiW18b} one can show that bULIM $\wedge$ BRS implies uniform global boundedness (UGB), see \cite{MiW18b} for a definition.
By \cite[Lemma 4]{MiW18b} UGB $\wedge$ ULS is equivalent to UGS.

(iv) $\Rightarrow$ (iii). Follows from the proof of \cite[Lemma 7]{MiW18b}.

(iii) $\Rightarrow$ (ii). Follows from Lemma~\ref{lem:UGS_und_bUAG_imply_UAG}.

(ii) $\Rightarrow$ (vi). Clear.

(ii) $\Iff$ (i). Follows by \cite[Lemma 8]{MiW18b}. % Proposition~\ref{thm:ISS_equals_UAG_plus_UGS}.
\end{proof}

\begin{remark}[Characterizations and small-gain theorems for strong ISS property]
\label{rem:sISS} 
In \cite{MiW18b} the notions of strong ISS, strong limit property (sLIM) and strong asymptotic gain (sAG) have been defined, which are weaker than ISS, ULIM, and UAG properties respectively. 
It is possible to define the \q{bounded inputs} versions of sLIM and sAG properties, which may be called bsLIM and bsAG, and show that
\begin{center}
strong ISS $\quad\Iff\quad$ bsAG $\wedge$ UGS $\quad\Iff\quad$ bsLIM $\wedge$ UGS.
\end{center}
Since the proof of this result is completely analogous to the proofs of Theorem~\ref{thm:New_ISS_Characterizations} and Lemma~\ref{lem:UGS_und_bUAG_imply_UAG}, we drop definitions and precise formulations of the results.

A harder and more interesting problem is to derive the small-gain theorem for the strong ISS property. The approach, used for the proof of ISS and weak ISS small-gain theorems cannot be straightforwardly adapted for the strong ISS case. Indeed, applying the cocycle property \eqref{eq:Cocycle_property}, which is the starting point in our proof technique, leads to the fact that the term $\bar{\phi}_i\big(t,x_i,(\phi_{\neq i}, u)\big)$ depends on an input $u$. Hence the time $\tau_i$, which is obtained in the next step of the proof of small-gain theorem for ISS property, depends on $u$ as well (since it depends on $\bar{\phi}_i\big(t,x_i,(\phi_{\neq i}, u)\big)$). But this is not what we want in order to achieve sAG (or bsAG) property of the interconnection.
\end{remark}

\vspace{-2mm}

\section*{Acknowledgements}

The author thanks Fabian Wirth for insightful discussions.

\vspace{-7mm}

\bibliographystyle{abbrv}
\bibliography{C:/Users/Andrii/Dropbox/TEX_Data/Mir_LitList_NoMir,C:/Users/Andrii/Dropbox/TEX_Data/MyPublications}

%\bibliography{C:/GoogleDrive/TEX_Data/Mir_LitList_NoMir,C:/GoogleDrive/TEX_Data/MyPublications}
 %Important: no space in the list.

%\input{Additions-SICON-Small-gain}

\end{document}